\newcommand{\suchthat}{\;\ifnum\currentgrouptype=16 \middle\fi|\;}
\newcommand{\cov}{\mathrm{Cov}}
\newcommand{\var}{\mathrm{Var}}
\begin{document}

\title{Gaussian Process Emulators for Computer Experiments with Inequality Constraints
}


\author{Hassan Maatouk         \and
        Xavier Bay
}


\institute{H. Maatouk \at
              \'Ecole des Mines de St-\'Etienne, 158 Cours Fauriel, Saint-\'Etienne, France \\
              Tel.: +336-49-61-74-41\\
              \email{hassan.maatouk@mines-stetienne.fr}           
           \and
           X. Bay \at
              \'Ecole des Mines de St-\'Etienne, 158 Cours Fauriel, Saint-\'Etienne, France
}

\date{Received: date / Accepted: date}

\maketitle

\begin{abstract}
Physical phenomena are observed in many fields (science and engineering)
and are often studied by time-consuming computer codes. These codes are analyzed with statistical models, often called emulators. In many situations, the physical system (computer model output) may be known to satisfy inequality constraints with respect to some or all input variables. Our aim is to build a model capable of incorporating both data interpolation and inequality constraints into a Gaussian process emulator.  
By using a functional decomposition, we propose a finite-dimensional approximation of Gaussian processes such that all conditional simulations satisfy the inequality constraints in the entire domain. The inequality mean and mode (i.e. mean and maximum \textit{a posteriori}) of the conditional Gaussian process are calculated and prediction intervals are quantified. To show the performance of the proposed model, some conditional simulations with inequality constraints such as boundedness, monotonicity or convexity conditions in one and two dimensions are given. A simulation study to investigate the efficiency of the method in terms of prediction and uncertainty quantification is included.
\keywords{Gaussian process emulator \and inequality constraints \and finite-dimensional approximation \and uncertainty quantification \and design and modeling of computer experiments}
\end{abstract}

\section{Introduction}
\label{intro}
In the engineering activity, runs of a computer code can be expensive and time-consuming. One solution is to use a statistical surrogate for conditioning
computer model outputs at some input locations (design points). Gaussian process
(GP) emulator is one of the most popular choices \cite{sacks1989}. The reason comes from the property of the GP that uncertainty can be quantified.
Furthermore, it has several nice properties. For example, the conditional GP at observation data (linear equality constraints) is still a GP \cite{cramer1967stationary}. Additionally, some inequality constraints (such as monotonicity and convexity) of output computer responses are related to partial derivatives. In such cases, the partial derivatives of the GP are also Gaussian Processes (GPs) (see e.g. \cite{cramer1967stationary} and \cite{opac-b1081425}). Incorporating an infinite number of linear inequality constraints into a GP emulator, the problem becomes more difficult. The reason is that the resulting conditional process is not a GP.\

In the literature of interpolation with inequality constraints, we find two types of methods. The first one is deterministic and based on splines, which have the advantage that inequality constraints are satisfied in the entire domain (see e.g. \cite{fritsch1980monotone}, \cite{doi10.1137/0909048}, \cite{1987}, \cite{Wolberg2002145} and \cite{Wright1980}).
The second one is based on the simulation of the conditional GP by using the subdivision of the input set (see e.g. \cite{Abrahamsen2001}, \cite{DaVeiga2012}, \cite{golchi2015monotone}, \cite{journals/jmlr/RiihimakiV10} and \cite{Wang2012}). In that case, the inequality constraints are satisfied in a finite number of input locations. However, uncertainty can be quantified. In this framework, \textit{constrained} Kriging has been studied in the domain of geostatistics (see e.g. \cite{Fouquet93} and \cite{kleijnen2012monotonicity}). In previous work, some methodologies have been based on the knowledge of the derivatives of the GP at some input locations (see e.g. \cite{golchi2015monotone}, \cite{journals/jmlr/RiihimakiV10} and \cite{Wang2012}). For monotonicity constraints with noisy data, a Bayesian approach was developed in \cite{journals/jmlr/RiihimakiV10}. In \cite{golchi2015monotone} the problem is to build a GP emulator by using the \textit{prior} monotonicity information of the computer model response with respect to some inputs. Their idea is based on an approach similar to \cite{journals/jmlr/RiihimakiV10} placing the derivatives information at specified input locations, by forcing the derivative process to be positive at these points. In such methodology, monotonicity constraints are not guaranteed in the entire domain. Recently, a methodology based on a discrete-location approximation for incorporating inequality constraints into a GP emulator was developed in \cite{DaVeiga2012}. Again, the inequality constraints are not guaranteed in the entire domain.\

On the other hand, Villalobos and Wahba \cite{1987} used splines to estimate an interpolation smooth function satisfying a finite number of linear inequality constraints. In term of estimation of monotone smoothing functions, using B-splines was firstly introduced by Ramsay \cite{ramsay1988}. The idea is based on the integration of B-splines defined on a properly set of knots with positive coefficients to ensure monotonicity constraints. A similar approach is applied to econometrics in \cite{Dole1999}. Xuming \cite{He96monotoneb-spline} takes the same approach and suggests the calculation of the coefficients by solving a finite linear minimization problem. A comparison to monotone kernel regression and an application to decreasing constraints are included.\

Our aim in this paper is to build a GP emulator incorporating the advantage of splines approach in order to ensure that inequality constraints are satisfied in the entire domain. We propose a finite-dimensional approximation of Gaussian Processes that converges uniformly pathwise. Usually, finite-dimensional approximations of Gaussian Processes (see e.g. \cite{trecate1999finite}) are truncated Karhunen-Lo\`eve decompositions, where the random coefficients are independent and the basis functions are the eigenfunctions of the covariance function describing the Gaussian process. It is not the case in this paper, the finite-dimensional model is also a linear decomposition of deterministic basis functions with Gaussian random coefficients but the coefficients are not independent. We show that the basis functions can be chosen such that inequality constraints of the GP are \textit{equivalent} to constraints on the coefficients. Therefore, the inequality constraints are reduced to a finite number of constraints. Furthermore, any posterior sample of coefficients leads to an interpolating function satisfying the inequality constraints in the entire domain.
Finally, the problem is reduced to simulate a Gaussian vector (random coefficients) restricted to convex sets which is a well-known problem with existing algorithms (see e.g. \cite{Botts}, \cite{Chopin2011FST19607241960748}, \cite{Emery2013}, \cite{Fouquet93},\cite{Geweke91efficientsimulation}, \cite{Maatouk2014}, \cite{journals/sac/PhilippeR03} and \cite{Robert}).\

The article is structured as follows~: in Sect.~\ref{GPCE}, we briefly recall Gaussian process modeling for computer experiments and the choice of covariance functions. In Sect.~\ref{GPMC}, we propose a finite-dimensional approximation of GPs capable of interpolating computer model outputs and incorporating inequality constraints in the entire domain, and we investigate its properties. In Sect.~\ref{SimStudy}, the performance of the proposed model in terms of prediction and uncertainty quantification using the simulation study in \cite{golchi2015monotone} is investigated. In Sect.~\ref{illustrative}, we show some simulated examples of the conditional GP with inequality constraints (such as boundedness, monotonicity or convexity conditions) in one and two dimensions. Additionally, two cases of truncated simulations are studied. We end up this paper by some concluding remarks and future work.

\section{Gaussian process emulators for computer experiments}\label{GPCE}
We consider the model $y=f(\boldsymbol{x})$, where the simulator response $y$ is assumed to be a deterministic real-valued function of the d-dimensional variable $\boldsymbol{x}=(x_1,\ldots,x_d)\in \mathbb{R}^d$. We suppose that the real function is continuous and evaluated at $n$ design points given by the rows of the $n\times d$ matrix $\boldsymbol{X}=\left(\boldsymbol{x}^{(1)},\ldots,\boldsymbol{x}^{(n)}\right)^\top$, where $\boldsymbol{x}^{(i)}\in \mathbb{R}^d, \ 1\leq i\leq n$. In practice, the evaluation of the function is expensive and must be considered highly
time-consuming. The solution is to estimate the unknown function $f$ by using 
a GP emulator also known as ``Kriging''.
In this framework, $y$ is viewed as a realization of a continuous GP, 
\begin{equation*}
Y(\boldsymbol{x}):=\eta(\boldsymbol{x})+Z(\boldsymbol{x}),
\end{equation*}
where the deterministic continuous function $\eta~: \ \boldsymbol{x}\in \mathbb{R}^d  \ \longrightarrow \ \eta (\boldsymbol{x})\in R$
is the mean and $Z$ is a zero-mean GP with continuous covariance function 
\begin{equation*}
K~: \ (\boldsymbol{u},\boldsymbol{v})\in\mathbb{R}^d\times \mathbb{R}^d \  \longrightarrow \ K(\boldsymbol{u},\boldsymbol{v})\in\mathbb{R}.
\end{equation*} 
Conditionally to the observation $\boldsymbol{y}=\left(y\left(x^{(1)}\right),\ldots,y\left(x^{(n)}\right)\right)^\top$
the process is still a GP~:
\begin{equation}\label{condGP}
Y(\boldsymbol{x}) \suchthat Y\left(\boldsymbol{X}\right)=\boldsymbol{y}\sim\mathcal{N}\left(\zeta(\boldsymbol{x}),\tau^2(\boldsymbol{x})\right),
\end{equation}
where
\begin{equation*}
\begin{array}{ll}
\zeta(\boldsymbol{x})=\eta(\boldsymbol{x})+\boldsymbol{k}(\boldsymbol{x})^\top \mathbb{K}^{-1}\left(\boldsymbol{y}-\boldsymbol{\mu}\right)\\
\tau^2(\boldsymbol{x})=K(\boldsymbol{x},\boldsymbol{x})-\boldsymbol{k}(\boldsymbol{x})^\top \mathbb{K}^{-1}\boldsymbol{k}(\boldsymbol{x})
\end{array}
\end{equation*}
and $\boldsymbol{\mu}=\eta(\boldsymbol{X})$ is the vector of trend values at the experimental design points,
$\mathbb{K}_{i,j}=K\left(\boldsymbol{x}^{(i)},\boldsymbol{x}^{(j)}\right), \ i,j=1,\ldots,n$ is the covariance matrix of $Y(\boldsymbol{X})$ and
$\boldsymbol{k}(\boldsymbol{x})=\left(K\left(\boldsymbol{x}, \boldsymbol{x}^{(i)}\right)\right)$ is the vector of covariance between $Y\left(\boldsymbol{x}\right)$ and $Y\left(\boldsymbol{X}\right)$. Additionally, the covariance function between any two inputs can be written as~:
\begin{equation*}
C(\boldsymbol{x},\boldsymbol{x}'):=\cov\left(Y(\boldsymbol{x}),Y(\boldsymbol{x}') \suchthat Y(\boldsymbol{X})=\boldsymbol{y}\right)=K(\boldsymbol{x},\boldsymbol{x}')-\boldsymbol{k}(\boldsymbol{x})^\top\mathbb{K}^{-1}\boldsymbol{k}(\boldsymbol{x}'),
\end{equation*}
where $C$ is the covariance function of the conditional GP.
The mean $\zeta(\boldsymbol{x})$ is called Simple Kriging (SK) mean prediction of $Y(\boldsymbol{x})$ based on the computer model outputs $Y\left(\boldsymbol{X}\right)=\boldsymbol{y}$, \cite{jones1998}.

\subsection{The choice of covariance function}
The choice of $K$ has crucial consequences specially in controlling the smoothness of the Kriging metamodel.
It must be chosen in the set of definite and positive kernels. Some popular kernels are the Gaussian kernel, Mat\'ern kernel (with parameter $\lambda = 3/2,5/2,\ldots$) and exponential kernel (Mat\'ern kernel with parameter $\lambda=1/2$). Notice that these kernels are placed in order of smoothness, the Gaussian kernel corresponding to $\mathcal{C}^{\infty}$ function\footnote{The space of functions that admit derivatives of all orders.} and the exponential kernel to continuous one (see \cite{Rasmussen2005GPM1162254} and Table~\ref{kernel}).
In the running examples of this paper, we will consider the Gaussian kernel defined by
\begin{equation*}
K(\boldsymbol{x},\boldsymbol{x}'):=\sigma^2\prod_{k=1}^d \exp\left(-\frac{\left(x_k-x'_k\right)^2}{2\theta_k^2}\right),
\end{equation*} 
for all $\boldsymbol{x}, \ \boldsymbol{x}'\in \mathbb{R}^d$, where $\sigma^2$ and $\boldsymbol\theta=(\theta_1,\ldots,\theta_d)$ are parameters.

\begin{table}[hptb]
\centering
\caption{Some popular kernel functions used in Kriging methods.}
\label{kernel}
\begin{tabular}{ccc}
\hline \hline
Name  &  Expression &  Class   \\   
\hline
Gaussian       & $\sigma^2\exp\left(-\frac{(x-x')^2}{2\theta^2}\right)$ 	& $\mathcal{C}^{\infty}$       \\
Mat\'ern 5/2   & $\sigma^2\left(1+\frac{\sqrt{5}\mid x-x'\mid}{\theta}+\frac{5(x-x')^2}{3\theta^2}\right)\exp\left(-\frac{\sqrt{5} | x-x'|}{\theta}\right)$
& $\mathcal{C}^2$      \\
Mat\'ern 3/2   &  $\sigma^2\left(1+\frac{\sqrt{3}\mid x-x'\mid}{\theta}\right)\exp\left(-\frac{\sqrt{3}|x-x'|}{\theta}\right)$   & $\mathcal{C}^1$      \\
Exponential    &  $\sigma^2\exp\left(-\frac{|x-x'|}{\theta}\right)$ & $\mathcal{C}^0$      \\   
\hline
\end{tabular}
\end{table}

\subsection{Derivatives of Gaussian processes}
In this paragraph, we assume that the paths of $Y(\boldsymbol{x})$ are of class $\mathcal{C}^p$ (i.e. the space of functions that admit derivatives up to order $p$). This can be guaranteed if $K$ is smooth enough, and in particular if $K$ is of class $\mathcal{C}^{\infty}$ (see \cite{cramer1967stationary}).
The linearity of the differentiation operation ensures that the order partial
derivatives of a GP are also GPs \cite{cramer1967stationary}, with (see e.g. \cite{opac-b1081425})~: 
\begin{eqnarray*}
\mathds{E}\left(\partial_{x_k}^pY(\boldsymbol{x})\right)&=&\frac{\partial^p}{\partial x_k^p}\mathds{E}\left(Y(\boldsymbol{x})\right),\\
\cov\left(\partial_{x_k}^pY\left(\boldsymbol{x}^{(i)}\right),\partial_{x_{\ell}}^q Y\left(\boldsymbol{x}^{(j)}\right)\right)&=&\frac{\partial^{p+q}}{\partial x_k^p\partial (x'_{\ell})^q}K\left(\boldsymbol{x}^{(i)},\boldsymbol{x}^{(j)}\right).
\end{eqnarray*}

\section{Gaussian process emulators with inequality constraints} \label{GPMC}
In this section, we assume that the real function (physical system) may be known to satisfy inequality constraints (such as boundedness, monotonicity or convexity conditions) in the entire domain. Our aim is to incorporate both interpolation conditions and  inequality constraints into a Gaussian process emulator.

\subsection{Formulation of the problem}
Without loss of generality, the input $\boldsymbol{x}$ is in $[0,1]^d\subset\mathbb{R}^d$. We assume that the real function $f$ is evaluated at $n$ distinct locations in the input set,
\begin{equation*}
f\left(\boldsymbol{x}^{(i)}\right)=y_i, \qquad i=1,\ldots,n.
\end{equation*}
Let $(Y({\boldsymbol{x}}))_{\boldsymbol{x}\in [0,1]^d}$ be a zero-mean GP with covariance function $K$ and $\mathcal{C}^0\left([0,1]^d\right)$ the space of continuous function on $[0,1]^d$. We denote by $C$ the subset of $\mathcal{C}^0\left([0,1]^d\right)$ corresponding to a given set of linear inequality constraints. We aim to get the conditional distribution of $Y$ given interpolation conditions and inequality constraints respectively as
\begin{equation*}
\begin{array}{ll}
Y\left(\boldsymbol{x}^{(i)}\right)=y_i, \qquad i=1,\ldots,n,\\
Y\in C. 
\end{array}
\end{equation*}

\subsection{Gaussian process approximation}
To handle the conditional distribution incorporating both interpolation conditions and inequality constraints, we propose a finite-dimensional approximation of Gaussian processes of the form~:
\begin{equation}\label{proposedmodel}
Y^N(\boldsymbol{x}):=\sum_{j=0}^N\xi_j\phi_j(\boldsymbol{x}), \qquad \boldsymbol{x}\in\mathbb{R}^d,
\end{equation}
where $\boldsymbol{\xi}=(\xi_0,\ldots,\xi_N)^\top$ is a zero-mean Gaussian vector with covariance matrix $\Gamma^N$ and $\phi=(\phi_0,\ldots,\phi_N)^\top$ is a vector of basis functions. The choice of these basis functions and $\Gamma^N$ depend on the type of inequality constraints. Notice that $Y^N$ is a zero-mean GP with covariance function 
\begin{equation*}
K_N(\boldsymbol{x},\boldsymbol{x}')=\phi(\boldsymbol{x})^\top \Gamma^N\phi(\boldsymbol{x}').
\end{equation*}
The advantage of the proposed model \eqref{proposedmodel} is that the simulation of the conditional GP is reduced to the simulation of the Gaussian vector $\boldsymbol{\xi}$ given that 
\begin{eqnarray}\label{Gausconst1}
&&\sum_{j=0}^N\xi_j\phi_j\left(\boldsymbol{x}^{(i)}\right)=y_i, \qquad i=1,\ldots,n,\\ \label{Gausconst2}
&&\boldsymbol{\xi}\in C_{\boldsymbol{\xi}},
\end{eqnarray}
where $C_{\boldsymbol{\xi}}=\left\{\boldsymbol{c}\in \mathbb{R}^{N+1}~: \ \sum_{j=0}^Nc_j\phi_j\in C, \ \boldsymbol{c}=(c_0,\ldots,c_N)^\top\right\}$. Hence the problem is \textit{equivalent} to simulate a Gaussian vector restricted to \eqref{Gausconst1} and \eqref{Gausconst2}. In the following sections, we give some examples of the choice of the basis functions and we explain how we compute the covariance matrix $\Gamma^N$ of the Gaussian vector $\boldsymbol{\xi}$ to ensure the convergence of the finite-dimensional approximation $Y^N$ to the original GP $Y$.\

Note that the finite-dimensional model \eqref{proposedmodel} does not correspond to a truncated Karhunen-Lo\`eve expansion $Y(x)=\sum_{j=0}^{+\infty}Z_je_j(x)$ (see e.g. \cite{Rasmussen2005GPM1162254}) since the coefficients $\xi_j$ are not independent (unlike the coefficients $Z_j$) and the basis functions $\phi_j$ are not the eigenfunctions $e_j$ of the Mercer kernel $K(x,x')$.

\subsection{One dimensional cases}
\subsubsection{Boundedness constraints}\label{BPCOD}
We assume that the real function defined in the unit interval is continuous and respects boundedness constraints (i.e. $a\leq f(x) \leq b, \ x\in [0,1]$), where $-\infty\leq a<b\leq +\infty$. In that case, the convex set $C$ is the space of bounded functions and is defined as
\begin{equation*}
C:=\left\{f\in \mathcal{C}^0\left([0,1]\right)~: \ a\leq f(x)\leq b, \ x\in [0,1]\right\}.
\end{equation*}
Let us begin by constructing the functions $h_j, \ j=0,\ldots,N$ that will be used in the proposed model. We first descretize the input set as $0=u_0 < u_1 <\ldots < u_N=1$, and on each knot we build a function. For the sake of simplicity, we use a uniform subdivision of the input set, but the methodology can be adapted for any subdivision. For example at the $j^{\text{th}}$ knot $u_j=j\Delta_N=j/N$, the associated function is 
\begin{equation}\label{hj}  
h_j(x)=h\left(\frac{x-u_j}{\Delta_N}\right), \qquad j=0,\ldots, N,
\end{equation}
where $\Delta_N=1/N$ and $h(x):=\left(1-| x|\right)\mathbb{1}_{(| x|\leq 1)}, \ x\in\mathbb{R}$, see Figures~\ref{B-splines} and \ref{h} below for $N=4$. 
Notice that the $h_j$'s are bounded between $0$ and $1$
and $\sum_{j=0}^Nh_j(x)=1$ for all $x$ in $[0,1]$.
Additionally, the value of these functions at any knot $u_i, \ i=0,\ldots,N$ is equal to the Kronecker's delta ($h_j(u_i)=\delta_{ij}$), where $\delta_{ij}$ is equal to one if $i=j$
and zero otherwise. \\

\begin{figure}[hptb]
\begin{minipage}{.5\linewidth}
\centering
\subfloat[]{\label{B-splines}\includegraphics[scale=.3]{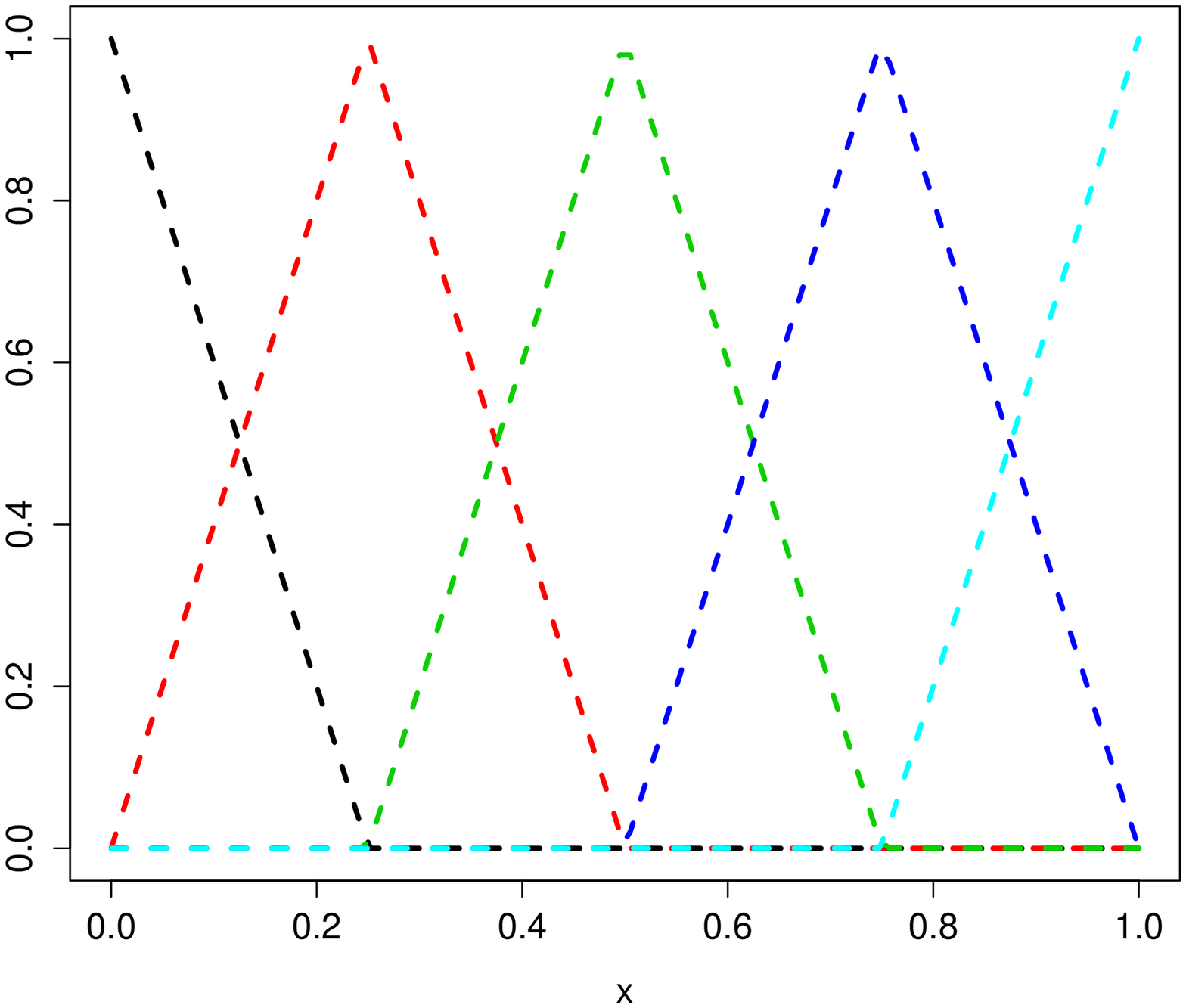}}
\end{minipage}%
\begin{minipage}{.5\linewidth}
\centering
\subfloat[]{\label{h}\includegraphics[scale=.3]{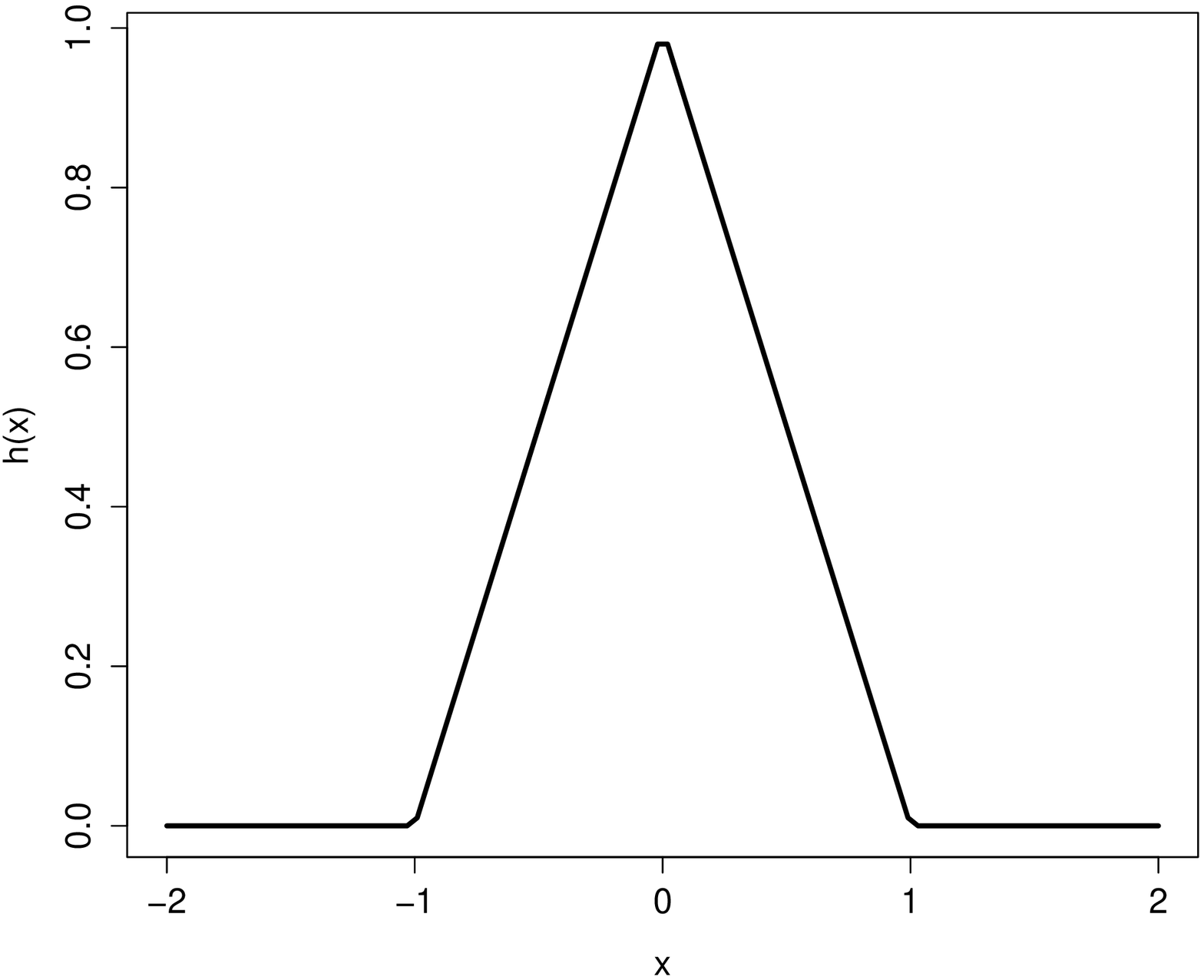}}
\end{minipage}
\caption{The basis functions $h_j, \ 0\leq j \leq 4$ (Figure~\ref{B-splines}) and the function $h$ (Figure~\ref{h}).}
\label{hih}
\end{figure}

\noindent The philosophy of the proposed method is presented in the following proposition~: \\

\begin{proposition}\label{boundaryproposition}
With the notations introduced before, the finite-dimensional approximation of GPs $(Y^N(x))_{x\in [0,1]}$ is defined as
\begin{equation}\label{boundaryapproach}
Y^N(x):=\sum_{j=0}^NY(u_j)h_j(x)=\sum_{j=0}^N\xi_jh_j(x),
\end{equation}
where $\xi_j=Y(u_j), \ j=0,\cdots,N$. If the realizations of the original GP $Y$ are continuous, then we have the following properties~: \\

\begin{itemize}
\item $Y^N$ is a finite-dimensional GP with covariance function $K_N(x,x')=h(x)^\top\Gamma^Nh(x')$, where $h(x)=(h_0(x),\ldots,h_N(x))^\top$, $\Gamma^N_{i,j}=K(u_i,u_j), \ i,j=0,\ldots,N$ and $K$ the covariance function of the original GP $Y$. \\

\item $Y^N$ converges uniformly pathwise to $Y$ when $N$ tends to infinity (with probability 1). \\

\item $Y^N$ is in $C$ if and only if the $(N+1)$ coefficients $Y(u_j)$ are contained in $[a,b]$.\\
\end{itemize}
\end{proposition}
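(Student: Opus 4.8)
The plan is to verify the three bullet points in turn; each follows from elementary properties of the hat functions $h_j$ together with standard facts about continuous functions on a compact interval.

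\emph{Gaussian structure and covariance.} Since $\boldsymbol{\xi}=(Y(u_0),\ldots,Y(u_N))^\top$ is a finite-dimensional marginal of the GP $Y$, it is a zero-mean Gaussian vector, and for any $x^{(1)},\ldots,x^{(m)}\in[0,1]$ the vector $(Y^N(x^{(1)}),\ldots,Y^N(x^{(m)}))$ is a deterministic linear image of $\boldsymbol{\xi}$, hence Gaussian; thus $Y^N$ is a zero-mean GP. I would then simply compute $\cov(Y^N(x),Y^N(x'))=\sum_{i,j}h_i(x)h_j(x')\cov(Y(u_i),Y(u_j))=\sum_{i,j}h_i(x)h_j(x')K(u_i,u_j)=h(x)^\top\Gamma^Nh(x')$, which identifies both $K_N$ and the matrix $\Gamma^N$ as claimed.

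\emph{Uniform pathwise convergence.} The key observation is that, because the $h_j$ are the standard hat functions associated with the mesh $0=u_0<\cdots<u_N=1$, the map $x\mapsto Y^N(x)$ is exactly the continuous piecewise-linear interpolant of $Y$ at the nodes $u_j$. Fix a realization of $Y$ that is continuous on $[0,1]$ (this event has probability $1$ by hypothesis); such a path is uniformly continuous on the compact $[0,1]$, with some modulus of continuity $\delta\mapsto m(\delta)$ tending to $0$ as $\delta\to 0$. For $x\in[u_j,u_{j+1}]$ we may write $Y^N(x)=\lambda\,Y(u_j)+(1-\lambda)\,Y(u_{j+1})$ with $\lambda\in[0,1]$, and since $|x-u_j|\leq\Delta_N$ and $|x-u_{j+1}|\leq\Delta_N$ this gives $|Y^N(x)-Y(x)|\leq\lambda|Y(u_j)-Y(x)|+(1-\lambda)|Y(u_{j+1})-Y(x)|\leq m(\Delta_N)$. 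Taking the supremum over $x\in[0,1]$ yields $\sup_{x\in[0,1]}|Y^N(x)-Y(x)|\leq m(1/N)\to 0$, which is the claimed uniform convergence with probability $1$.

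\emph{Membership in $C$.} At a node $u_i$ the Kronecker-delta property gives $Y^N(u_i)=\sum_j Y(u_j)h_j(u_i)=Y(u_i)$, so if $Y^N\in C$ then necessarily $a\leq Y(u_i)\leq b$ for every $i$. Conversely, assume $a\leq Y(u_j)\leq b$ for all $j$. For any $x\in[0,1]$ the number $Y^N(x)=\sum_j Y(u_j)h_j(x)$ is a convex combination of the values $Y(u_j)$, because $h_j(x)\geq 0$ and $\sum_j h_j(x)=1$; hence $a\leq Y^N(x)\leq b$. Together with the continuity of $Y^N$ (a finite linear combination of continuous functions) this shows $Y^N\in C$.

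\emph{Expected difficulty.} None of the three steps is deep; the only one requiring some care is the uniform-convergence step, where one must recognise the piecewise-linear-interpolant structure of $Y^N$ and invoke uniform continuity of the sample paths on the compact interval, noting that the exceptional set of non-continuous paths has probability $0$ by assumption. It is precisely the nonnegativity and partition-of-unity properties of the $h_j$, recorded just before the statement, that make the convex-combination argument work and that motivate this particular choice of basis.
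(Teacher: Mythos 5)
Your proposal is correct and follows essentially the same route as the paper: identify $Y^N$ as a Gaussian linear image of the marginal vector $(Y(u_0),\ldots,Y(u_N))^\top$ and compute the covariance directly; use nonnegativity and the partition-of-unity property of the $h_j$ (equivalently, the piecewise-linear interpolant structure) together with uniform continuity of the sample paths on $[0,1]$ to get uniform pathwise convergence; and use the Kronecker-delta property at the knots plus the convex-combination argument for the equivalence with $a\leq Y(u_j)\leq b$. The only cosmetic difference is that you bound the interpolation error segmentwise via a modulus of continuity, while the paper bounds it globally through $\sum_j \sup_{|x-x'|\leq\Delta_N}|Y(x')-Y(x)|\,h_j(x)$; these are the same argument.
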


\noindent The advantage of this model is that the infinite number of inequality constraints of $Y^N$ are \textit{equivalent} to a finite number of constraints on the coefficients $(Y(u_j))_{0\leq j\leq N}$. Therefore the problem is reduced to simulate the Gaussian vector $\boldsymbol{\xi}=(Y(u_0),\ldots,Y(u_N))^\top$ restricted to the convex subset formed by the two constraints \eqref{Gausconst1} and \eqref{Gausconst2}, where $C_{\boldsymbol{\xi}}=\{\boldsymbol{\xi}\in \mathbb{R}^{N+1}~: \ a\leq \xi_{j}\leq b, \ j=0,\ldots,N\}$. \\
 
\begin{proof}[Proof of Proposition~\ref{boundaryproposition}]
Since $Y(u_j), \ j=0,\ldots,N$ are Gaussian variables, then $Y^N$ is a GP with dimension
equal to $N+1$ and covariance function 
\begin{equation*}\label{finitecov}
\cov\left(Y^N(x),Y^N(x')\right)=\sum_{i,j=0}^N\cov\left(Y(u_i),Y(u_j)\right)h_i(x)h_j(x')=\sum_{i,j=0}^NK(u_i,u_j)h_i(x)h_j(x').
\end{equation*} 
To prove the pathwise convergence of $Y^N$ to $Y$, write more explicitly, for any $\omega\in\Omega$
\begin{equation*}
Y^N(x;\omega):=\sum_{j=0}^NY(u_j;\omega)h_j(x).
\end{equation*}
Hence, the sample paths of the approximating process $Y^N$ are piecewise linear approximations of the sample paths of the original process $Y$. From $h_j\geq 0$ and $\sum_{j=0}^Nh_j(x)=1$, for all $x\in [0,1]$, we get
\begin{eqnarray}
\left|Y^N(x;\omega)-Y(x;\omega)\right|&=&\left|\sum_{j=0}^N(Y(u_j;\omega)-Y(x;\omega)h_j(x)\right| \nonumber \\
&\leq & \sum_{j=0}^N \sup_{|x-x'|\leq \Delta_N}\left|Y(x';\omega)-Y(x;\omega)\right|h_j(x)=\sup_{|x-x'|\leq \Delta_N}\left|Y(x';\omega)-Y(x;\omega)\right|. \label{boundinequality}
\end{eqnarray}
By uniformly continuity of sample paths of the process $Y$ on the compact interval $[0,1]$, this last inequality \eqref{boundinequality} shows that
\begin{equation*}
\sup_{x\in [0,1]}\left|Y^N(x;\omega)-Y(x;\omega)\right|\underset{N\to +\infty}\longrightarrow 0
\end{equation*}
with probability 1. Now, if the $(N+1)$ coefficients $Y(u_j)_{0\leq j\leq N}$ are in the interval $[a,b]$ then the piecewise linear approximation $Y^N$ is in $C$. Conversely, suppose that $Y^N$ is in $C$ then 
\begin{equation*}\label{unifcontinuous}
Y^N(u_i)=\sum\limits_{j=0}^NY(u_j)h_j(u_i)=\sum\limits_{j=0}^NY(u_j)\delta_{ij}=Y(u_i)\in [a,b],
\end{equation*}
$i=0,\ldots,N$, which completes the proof of the last property, and hence
concludes the proof of the proposition. \qed
\end{proof}

\paragraph{Simulated paths.} As shown in Proposition~\ref{boundaryproposition}, the simulation of the finite-dimensional approximation of Gaussian processes $Y^N$ conditionally to given data and boundedness constraints ($Y^N\in I\cap C$) is reduced to simulate the Gaussian vector $\boldsymbol{\xi}$ restricted to $I_{\boldsymbol{\xi}}\cap C_{\boldsymbol{\xi}}$~:
\begin{eqnarray*}
&&I_{\boldsymbol{\xi}}=\left\{\boldsymbol{\xi}\in \mathbb{R}^{N+1}~: \ A\boldsymbol{\xi}=\boldsymbol{y}\right\},\\
&&C_{\boldsymbol{\xi}}=\left\{\boldsymbol{\xi}\in \mathbb{R}^{N+1}~: \ a\leq \xi_{j}\leq b, \ j=0,\ldots,N\right\},
\end{eqnarray*}
where the $n\times (N+1)$ matrix $A$ is defined as $A_{i,j}:=h_j\left(x^{(i)}\right)$. The interpolation system $A\boldsymbol{\xi}=\boldsymbol{y}$ admits solutions only if $N+1-n\geq 1$ (number of degrees of freedom).\\

The sampling scheme can be summarized in two steps~: first of all, we compute the conditional distribution of the Gaussian vector $\boldsymbol{\xi}$ with respect to data interpolation 
\begin{equation}\label{intXi}
\boldsymbol{\xi} \suchthat A\boldsymbol{\xi}=\boldsymbol{y} \sim \mathcal{N}\left(\left(A\Gamma^N\right)^\top\left(A\Gamma^NA^\top\right)^{-1}\boldsymbol{y},\Gamma^N-\left(A\Gamma^N\right)^\top\left(A\Gamma^NA^\top\right)^{-1}A\Gamma^N\right).
\end{equation}
Then, we simulate the Gaussian vector $\boldsymbol{\xi}$ with the above distribution \eqref{intXi} and, using an improved rejection sampling \cite{Maatouk2014}, we select only random coefficients in the convex set $[a,b]$. The sample paths of the conditional Gaussian process are generated by equation \eqref{boundaryapproach}, hence satisfy both interpolation conditions and boundedness constraints in the entire domain (see the R package developed in \cite{maatoukpackage2015} for more details).

\subsubsection{Monotonicity constraints}\label{MC} 
In this section, the real function $f$ is assumed to be of class $\mathcal{C}^1$. The convex set $C$ is the space of non-decreasing functions and is defined as 
\begin{equation*}
C:=\left\{f\in \mathcal{C}^1([0,1])~: \ f'(x)\geq 0, \ x\in[0,1]\right\}.
\end{equation*}
Since the monotonicity is related to the sign of the derivative, then the proposed model is adapted from model \eqref{boundaryapproach}. The basis functions are defined as the primitive functions of $h_j$, 
\begin{equation*}\label{monbasis}
\phi_j(x):=\int_0^xh_j(t)dt, \qquad x\in [0,1].
\end{equation*}
Remark that the derivative of the basis functions $\phi_j$ at any knot $u_i, \ i=0,\ldots,N$ is equal to the Kronecker's delta $(\phi'_j(u_i)=\delta_{ij})$. In Figure~\ref{phii}, we illustrate the basis functions $\phi_j$, $0\leq j\leq 4$. Notice that all these functions are non-decreasing and starting from $0$. In Figure~\ref{h2phi2}, we plot the basis function $\phi_2$ and the associate function $h_2$ for $N=4$. \\

\begin{figure}[hptb]
\begin{minipage}{.5\linewidth}
\centering
\subfloat[]{\label{phii}\includegraphics[scale=.3]{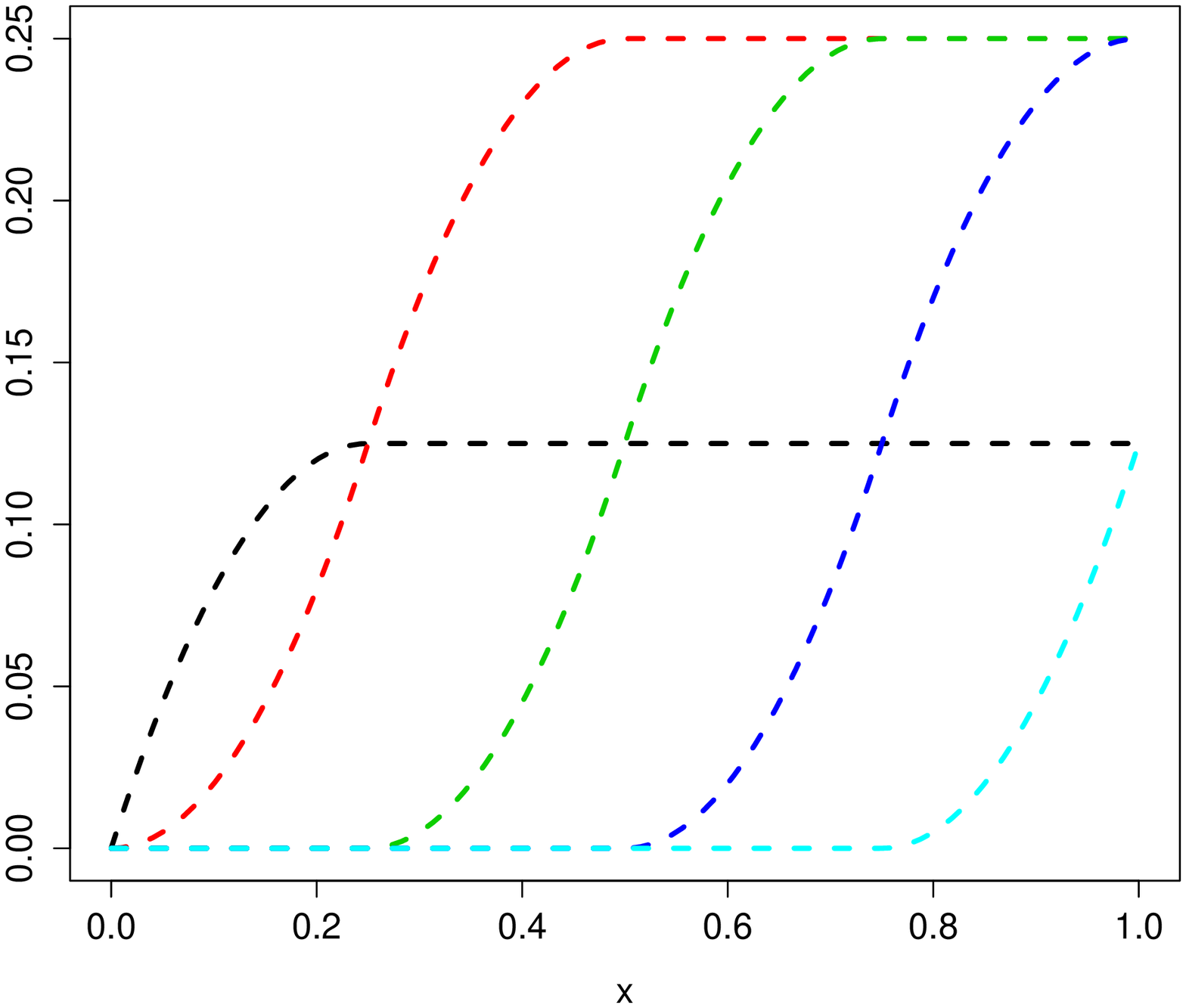}}
\end{minipage}%
\begin{minipage}{.5\linewidth}
\centering
\subfloat[]{\label{h2phi2}\includegraphics[scale=.3]{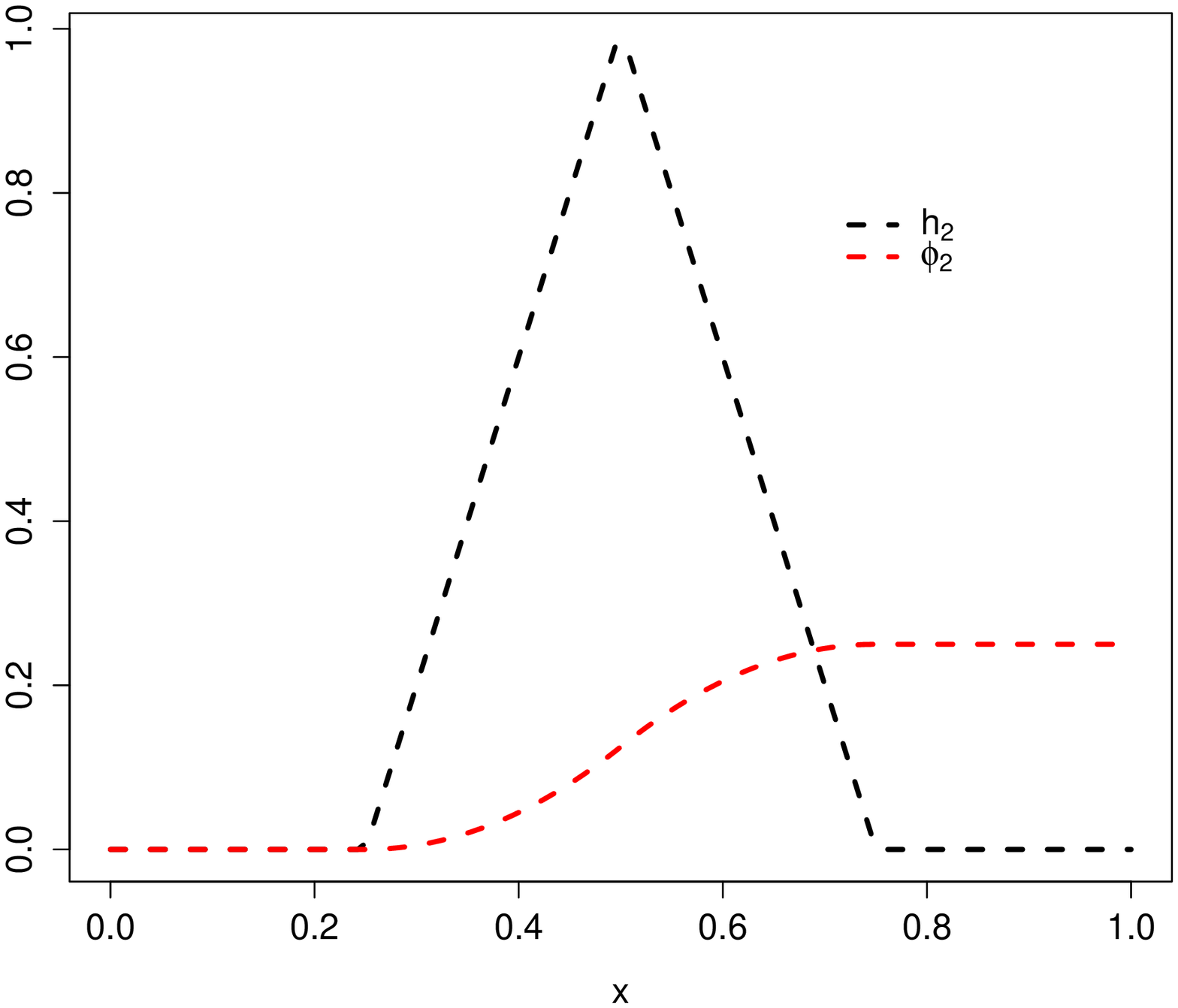}}
\end{minipage}
\caption{The basis functions $\phi_j, \ 0\leq j\leq 4$ (Figure~\ref{phii}) and the function $h_2$ with the corresponding function $\phi_2$ 
(Figure~\ref{h2phi2}).}
\label{hiphii}
\end{figure}

\noindent Similarly to Proposition~\ref{boundaryproposition}, we have the following results. \\
\begin{proposition}\label{monotonicityproposition}
Suppose that the realizations of the original GP $Y$ are almost surely continuously differentiable. Using the notations introduced before, the finite-dimensional approximation of Gaussian processes $(Y^N(x))_{x\in [0,1]}$ is defined as
\begin{equation}\label{monotonicityapproach}
Y^N(x):=Y(0)+\sum_{j=0}^NY'(u_j)\phi_j(x)=\zeta+\sum_{j=0}^N\xi_j\phi_j(x),
\end{equation}
where $\zeta=Y(0)$ and $\xi_j=Y'(u_j), \ j=0,\cdots,N$. Then we have the following properties~: \\ 

\begin{itemize}
\item $Y^N$ is a finite-dimensional GP with covariance function
{\em \begin{equation*}
K_N(x,x')=\left(1,\phi(x)^\top\right)\Gamma_{\text{new}}^N\left(1,\phi(x')^\top\right)^\top, 
\end{equation*}}
where $\phi(x)=(\phi_0(x),\ldots,\phi_N(x))^\top$ and {\em $\Gamma^N_{\text{new}}$} is the covariance matrix of the Gaussian vector $(\zeta,\boldsymbol{\xi})=\left(Y(0),Y'(u_0),\ldots,Y'(u_N)\right)^\top$ which is equal to~: 
{\em \begin{equation*}
\Gamma^N_{\text{new}}=\left[\begin{matrix}
K(0,0) & \frac{\partial K}{\partial x'}(0,u_j) \\\\
\frac{\partial K}{\partial x}(u_i,0) & \Gamma^N_{i,j}\\
\end{matrix}
\right]_{0\leq i,j\leq N},
\end{equation*}}
with $\Gamma^N_{i,j}=\frac{\partial^2K}{\partial x\partial x'}(u_i,u_j), \ i,j=0,\ldots,N$ and $K$ the covariance function of the original GP $Y$. \\

\item $Y^N$ converges uniformly to $Y$ when $N$ tends to infinity (with probability 1). \\

\item $Y^N$ is non-decreasing if and only if the coefficients $(Y'(u_j))_{0\leq j\leq N}$ are all nonnegative.\\
\end{itemize}
\end{proposition}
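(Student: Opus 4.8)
The plan is to mirror the proof of Proposition~\ref{boundaryproposition}, shifting every argument down by one derivative. For the first bullet: differentiation is a linear (mean-square) operation, so $Y'(u_j)$ is the $L^2$-limit of the Gaussian variables $(Y(u_j+t)-Y(u_j))/t$ and is therefore Gaussian; jointly with $Y(0)$, the vector $(\zeta,\boldsymbol{\xi})=(Y(0),Y'(u_0),\ldots,Y'(u_N))^\top$ is a zero-mean Gaussian vector. Its covariance entries are read off from the formulas of Sect.~\ref{GPCE} for derivatives of GPs: $\cov(Y(0),Y(0))=K(0,0)$, $\cov(Y(0),Y'(u_j))=\frac{\partial K}{\partial x'}(0,u_j)$, and $\cov(Y'(u_i),Y'(u_j))=\frac{\partial^2 K}{\partial x\partial x'}(u_i,u_j)$, i.e. exactly the block matrix $\Gamma^N_{\text{new}}$. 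Since $Y^N(x)=\left(1,\phi(x)^\top\right)(\zeta,\boldsymbol{\xi})^\top$ is a fixed linear image of this Gaussian vector, $Y^N$ is a finite-dimensional GP, and a one-line bilinearity computation gives $K_N(x,x')=\left(1,\phi(x)^\top\right)\Gamma^N_{\text{new}}\left(1,\phi(x')^\top\right)^\top$.

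For the third bullet I would use $\phi_j'=h_j$, so that $(Y^N)'(x)=\sum_{j=0}^N\xi_j h_j(x)$ with $\xi_j=Y'(u_j)$. If all $\xi_j\geq 0$, then $(Y^N)'\geq 0$ on $[0,1]$ because $h_j\geq 0$, hence $Y^N$ is non-decreasing, i.e. $Y^N\in C$. Conversely, if $Y^N$ is non-decreasing then $(Y^N)'\geq 0$ everywhere; evaluating at $u_i$ and using $h_j(u_i)=\delta_{ij}$ gives $Y'(u_i)=(Y^N)'(u_i)\geq 0$ for every $i$, which settles the equivalence.

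The uniform pathwise convergence is the step that needs the most care, and the idea is to reduce it to Proposition~\ref{boundaryproposition} applied to the \emph{derivative} process. Fix $\omega$. Integrating $\sum_{j=0}^N h_j\equiv 1$ on $[0,1]$ gives $\sum_{j=0}^N\phi_j(x)=x$, and $\phi_j(0)=0$, so $Y^N(0;\omega)=Y(0;\omega)$. The derivative $(Y^N)'(\cdot;\omega)=\sum_{j=0}^N Y'(u_j;\omega)h_j(\cdot)$ is precisely the piecewise-linear interpolant of the continuous path $Y'(\cdot;\omega)$, so the estimate \eqref{boundinequality} — applied verbatim with $Y$ replaced by $Y'$ — together with uniform continuity of $Y'(\cdot;\omega)$ on the compact $[0,1]$ (guaranteed by the $\mathcal{C}^1$ assumption) shows that $\sup_{x\in[0,1]}|(Y^N)'(x;\omega)-Y'(x;\omega)|\to 0$. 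Finally, writing $Y^N(x;\omega)-Y(x;\omega)=\int_0^x\big((Y^N)'(t;\omega)-Y'(t;\omega)\big)\,dt$ and bounding the integrand by its supremum over $[0,1]$ yields $\sup_{x\in[0,1]}|Y^N(x;\omega)-Y(x;\omega)|\leq \sup_{x\in[0,1]}|(Y^N)'(x;\omega)-Y'(x;\omega)|\to 0$, with probability one. I expect the main obstacle to be purely the bookkeeping: justifying the termwise differentiation of the finite sum, identifying $(Y^N)'$ with the piecewise-linear interpolant of $Y'$, and passing cleanly from uniform control of the derivatives to uniform control of the functions; the probabilistic content is identical to the boundedness case.
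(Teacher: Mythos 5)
Your proposal is correct and follows essentially the same route as the paper's proof: Gaussianity of $(\zeta,\boldsymbol{\xi})$ via the derivative-of-a-GP facts from Sect.~\ref{GPCE}, reduction of the uniform convergence to Proposition~\ref{boundaryproposition} applied to the derivative process through the integral representation $Y^N(x;\omega)=Y(0;\omega)+\int_0^x\sum_j Y'(u_j;\omega)h_j(t)\,dt$, and the knot-evaluation argument $(Y^N)'(u_i)=Y'(u_i)$ for the equivalence. Your write-up is in fact slightly more explicit than the paper's (e.g.\ the $L^2$-limit justification of Gaussianity and the final supremum bound on the integral), but the ideas coincide.
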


\noindent From the last property, the problem is reduced to simulate the Gaussian vector $(\zeta,\boldsymbol{\xi})$ restricted to the convex set formed by the interpolation conditions and the inequality constraints respectively, 
\begin{eqnarray*}
&&Y^N\left(x^{(i)}\right)=\zeta+\sum_{j=0}^N\xi_j\phi_j\left(x^{(i)}\right)=y_i, \qquad i=1,\ldots,n, \\
&& (\zeta,\boldsymbol{\xi})\in C_{\boldsymbol{\xi}}=\left\{\left(\zeta,\boldsymbol{\xi}\right)\in \mathbb{R}^{N+2}~: \ \xi_j\geq 0, \ j=0,\ldots,N\right\}. 
\end{eqnarray*}
\begin{proof}[Proof of Proposition~\ref{monotonicityproposition}]
The first property is a consequence of the fact that the derivative of a GP is also a GP. For all $x,x'\in [0,1]$, 
\begin{eqnarray*}\label{KNmon}
K_N(x,x')&=&\cov\left(Y^N(x),Y^N(x')\right)=\var\left(Y(0)\right)+\sum_{i=0}^N\frac{\partial K}{\partial x}(u_i,0)\phi_i(x)\\
&+&\sum_{j=0}^N\frac{\partial K}{\partial x'}(0,u_j)\phi_j(x)+\sum_{i,j=0}^N\frac{\partial^2 K}{\partial x\partial x'}(u_i,u_j)\phi_i(x)\phi_j(x').
\end{eqnarray*}
To prove the pathwise convergence of $Y^N$ to $Y$, let us write that for any $\omega\in\Omega$,
\begin{equation*}
Y^N(x;\omega)=Y(0;\omega)+\int_0^x\left(\sum_{j=0}^NY'(u_j;\omega)h_j(t)\right)dt.
\end{equation*}
From Proposition~\ref{boundaryproposition}, $\sum_{j=0}^NY'(u_j;\omega)h_j(x)$ converges uniformly pathwise to $Y'(x;\omega)$ since the realizations of the process are almost surely continuously differentiable. One can conclude that $Y^N$ converges uniformly to $Y$ for almost all $\omega\in \Omega$. Now, if $Y'(u_j), \ j=0,\ldots,N$ are all nonnegative then $Y^N$ is non-decreasing
since the basis functions $(\phi_j)_{0\leq j\leq N}$ are non-decreasing. Conversely, if $Y^N$ is non-decreasing, we have
\begin{equation*}
0\leq \left(Y^N\right)'(u_i)=\sum_{j=0}^NY'(u_j)h_j(u_i)=Y'(u_j),
\end{equation*}
$i=0,\ldots,N$, which completes the proof of the last property and the proposition.  \qed 
\end{proof} 

\paragraph{Simulated paths.} As shown in Proposition~\ref{monotonicityproposition}, the simulation of the finite-dimensional approximation of Gaussian processes $Y^N$ conditionally to given data and monotonicity constraints ($Y^N\in I\cap C$) is reduced to simulate the Gaussian vector $(\zeta,\boldsymbol{\xi})$ restricted to $I_{\boldsymbol{\xi}}\cap C_{\boldsymbol{\xi}}$~:
\begin{eqnarray*}
&&I_{\boldsymbol{\xi}}=\left\{(\zeta,\boldsymbol{\xi})\in\mathbb{R}^{N+2}~: \ A(\zeta,\boldsymbol{\xi})=\boldsymbol{y}\right\},\\
&&C_{\boldsymbol{\xi}}=\left\{(\zeta,\boldsymbol{\xi})\in\mathbb{R}^{N+2}~: \ \xi_j\geq 0, \ j=0,\ldots,N\right\},
\end{eqnarray*}
where the $n\times (N+2)$ matrix $A$ is defined as 
\begin{eqnarray*}
A_{i,j}:=\left\{\begin{array}{ll}
1 & \mbox{for} \ i=1,\ldots,n \ \text{and} \ j=1,\\
\phi_{j-2}\left(x^{(i)}\right) & \mbox{for} \ i=1,\ldots,n \ \text{and} \ j=2,\ldots,N+2.
\end{array}\right.
\end{eqnarray*}
We simulate the Gaussian vector $(\zeta,\boldsymbol{\xi})$ with the conditional distribution defined in \eqref{intXi}, where $\Gamma^N$ is replaced by $\Gamma^N_{\text{new}}$. Then, using an improved rejection sampling \cite{Maatouk2014}, we select the nonnegative coefficients $\xi_j$. Finally, the sample paths of the conditional Gaussian process are generated by equation \eqref{monotonicityapproach} which satisfy both interpolation conditions and monotonicity constraints in the entire domain.
 \\

\begin{remark}[Monotonicity of continuous but non-derivable functions]\label{mon1DC0}
{\rm If the real function is of class $\mathcal{C}^0$ only (but possibly not derivable) and non-decreasing in the entire domain, then the proposed model defined in \eqref{boundaryapproach} is non-decreasing \textit{if and only if} the sequence of coefficients $(Y(u_j))_j,\ j=0,\ldots,N$ is non-decreasing (i.e. $Y(u_{j-1})\leq Y(u_j), \ j=1,\ldots,N$). The simulated paths are generated using the same strategy in Sect.~\ref{BPCOD}, where $C_{\boldsymbol{\xi}}=\{\boldsymbol{\xi}\in \mathbb{R}^{N+1}~: \ \xi_{j-1}\leq \xi_{j}, \ j=1,\ldots,N\}$.}
\end{remark}

\subsubsection{Convexity constraints}\label{CC}
In this section, the real function is supposed to be two times differentiable. Since the functions $h_j, \ j=0,\ldots,N$ defined in \eqref{hj} are all nonnegative, then the basis functions $\varphi_j$ are taken as the two times primitive functions of $h_j$, 
\begin{equation*}
\varphi_j(x) :=\int_0^x\left(\int_0^th_j(u)du\right)dt.
\end{equation*}

In Figure~\ref{convexB}, we illustrate the basis functions $\varphi_j, \ (0\leq j\leq 4)$. Notice that all these functions are convex and pass through the origin. Moreover, the derivatives at the origin are equal to zero. In Figure~\ref{phi2h2}, we illustrate the basis function $\varphi_2$ and the associate function $h_2$.

\begin{figure}[hptb]
\begin{minipage}{.5\linewidth}
\centering
\subfloat[]{\label{convexB}\includegraphics[scale=.3]{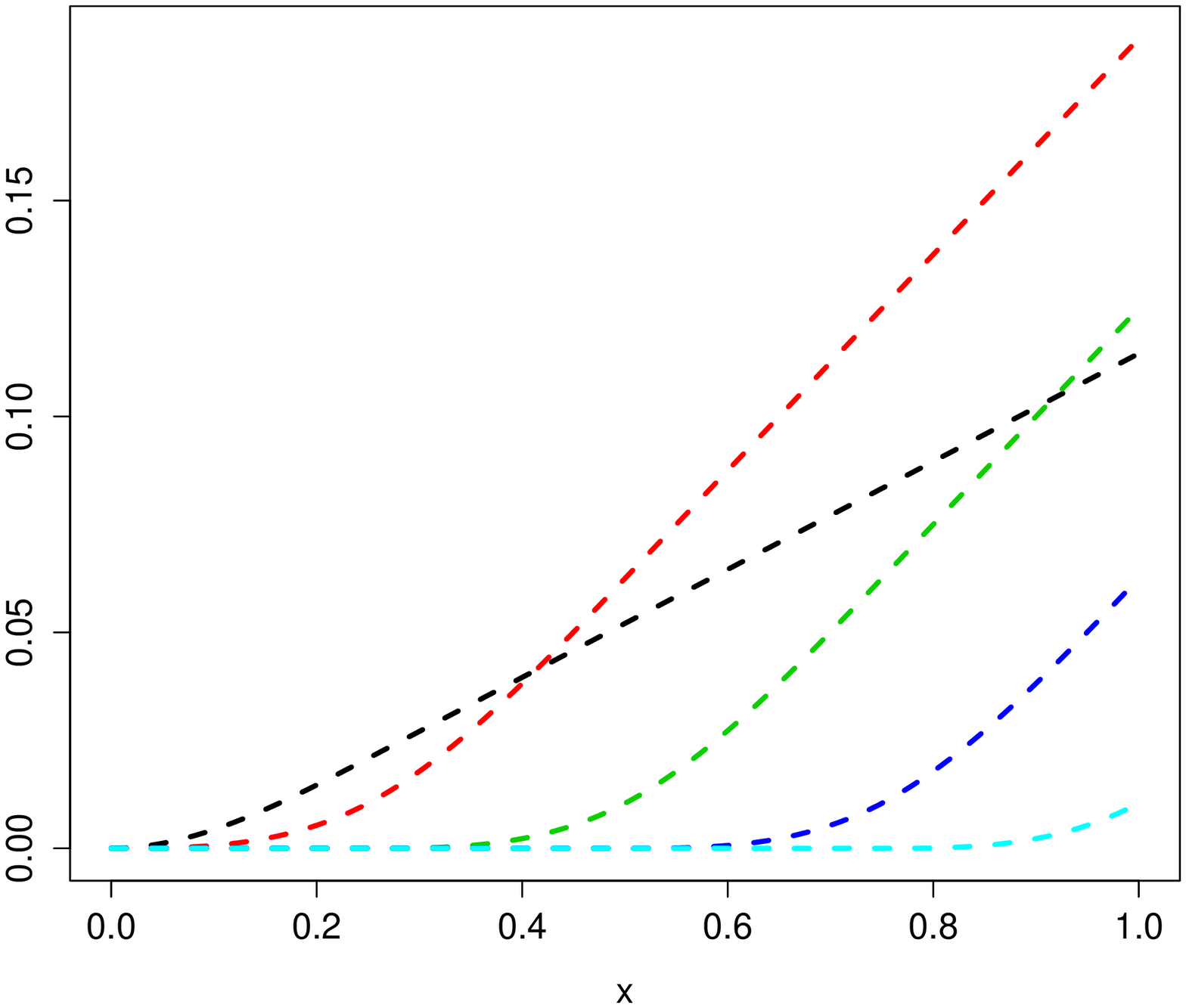}}
\end{minipage}%
\begin{minipage}{.5\linewidth}
\centering
\subfloat[]{\label{phi2h2}\includegraphics[scale=.31]{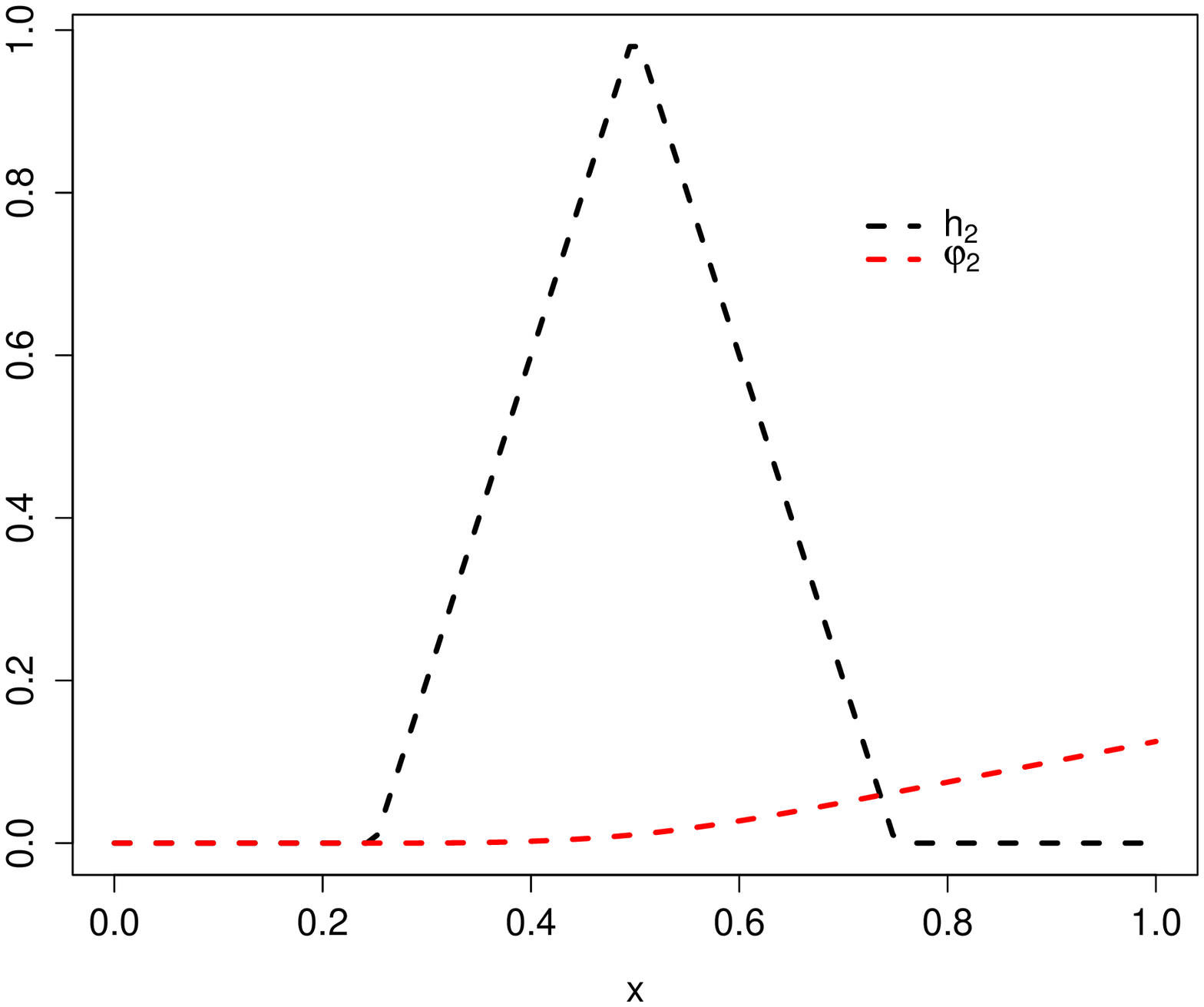}}
\end{minipage}
\caption{The basis functions $\varphi_j, \ 0\leq j \leq 4$ (Figure~\ref{convexB}) and the function $h_2$ with the corresponding function $\varphi_2$ (Figure~\ref{phi2h2}).}
\label{phiihi}
\end{figure}

Similarly to the monotonicity case, the second derivative of the basis functions $\varphi''_j$ at any knot $u_i, \ (0\leq i\leq N)$ is equal to Kronecker's delta $(\varphi''_j(u_i)=\delta_{ij})$. We assume here that the realizations of the original GP $Y$ are at least two times differentiable. The finite-dimensional approximation defined as 
\begin{equation}\label{convexityapproach}
Y^N(x):=Y(0)+Y'(0)x+\sum_{j=0}^NY''(u_j)\varphi_j(x)=\zeta+\kappa x+\sum_{j=0}^N\xi_j\varphi_j(x), 
\end{equation}
is convex \textit{if and only if} the $(N+1)$ random coefficients $\xi_j=Y''(u_j)$ are all nonnegative,
where $\zeta=Y(0)$ and $\kappa=Y'(0)$. Thus, the problem is reduced to generate the Gaussian vector 
$(\zeta,\kappa,\boldsymbol{\xi})=\left(Y(0),Y'(0),Y''(u_0),\ldots,Y''(u_N)\right)^\top$ restricted to the convex set $I_{\boldsymbol{\xi}}\cap C_{\boldsymbol{\xi}}$, where
\begin{eqnarray*}
&&I_{\boldsymbol{\xi}}=\left\{(\zeta,\kappa,\boldsymbol{\xi})\in\mathbb{R}^{N+3}~: \ \zeta+\kappa x^{(i)}+\sum_{j=0}^N\xi_j\varphi_j\left(x^{(i)}\right)=y_i, \ i=1,\ldots,n\right\}, \\
&&C_{\boldsymbol{\xi}}=\left\{(\zeta,\kappa,\boldsymbol{\xi})\in \mathbb{R}^{N+3}~: \ \xi_j \geq 0, \ j=0,\ldots,N\right\}.
\end{eqnarray*}
Its $(N+3)\times(N+3)$ covariance matrix $\Gamma^N_{\text{new}}$ is given by
\begin{equation*}
\Gamma^N_{\text{new}}=\left[\begin{matrix}
\var(\zeta) & \cov(\zeta,\kappa) & \cov(\zeta,\boldsymbol{\xi}) \\\\
\cov(\kappa,\zeta) & \var(\kappa) & \cov(\kappa,\boldsymbol{\xi}) \\\\
\cov(\boldsymbol{\xi},\zeta) &  \cov(\boldsymbol{\xi},\kappa) & \cov(\boldsymbol{\xi},\boldsymbol{\xi}) \\
\end{matrix}
\right]=\left[\begin{matrix}
K(0,0) & \frac{\partial K}{\partial x'}(0,0) & \frac{\partial^2 K}{\partial (x')^2}(0,u_j)\\\\
\frac{\partial K}{\partial x}(0,0) & \frac{\partial^2K}{\partial x\partial x'}(0,0) & 
\frac{\partial^3K}{\partial x\partial (x')^2}(0,u_j)\\\\
\frac{\partial^2 K}{\partial x^2}(u_i,0) & \frac{\partial^3K}{\partial x^2\partial x'}(u_i,0) &\Gamma^N_{i,j}
\end{matrix}
\right]_{0\leq i,j\leq N},
\end{equation*}
\noindent where
\begin{equation*}
\Gamma^N_{i,j}=\cov(\xi_i,\xi_j)=\cov(Y''(u_i),Y''(u_j))=\frac{\partial^4K}{\partial x^2\partial (x')^2}(u_i,u_j), \qquad i,j=0,\ldots,N.
\end{equation*}
Finally, the covariance function of the finite-dimensional approximation of GPs is equal to~:
\begin{equation*}
K_N(x,x')=\left(1,x,\varphi(x)^\top\right)\Gamma_{\text{new}}^N\left(1,x',\varphi(x')^\top\right)^\top, 
\end{equation*}
where $\varphi(x)=\left(\varphi_0(x),\ldots,\varphi_N(x)\right)^\top$. \\

\paragraph{Simulated paths.} As shown in this section, the simulation of the finite-dimensional approximation of Gaussian processes $Y^N$ conditionally to given data and convexity constraints ($Y^N\in I\cap C$) is reduced to simulate the Gaussian vector $(\zeta,\kappa,\boldsymbol{\xi})$ restricted to $I_{\boldsymbol{\xi}}\cap C_{\boldsymbol{\xi}}$~:
\begin{eqnarray*}
&&I_{\boldsymbol{\xi}}=\left\{(\zeta,\kappa,\boldsymbol{\xi})\in\mathbb{R}^{N+3}~: \ A(\zeta,\kappa,\boldsymbol{\xi})=\boldsymbol{y}\right\},\\
&&C_{\boldsymbol{\xi}}=\left\{(\zeta,\kappa,\boldsymbol{\xi})\in\mathbb{R}^{N+3}~: \ \xi_j\geq 0, \ j=0,\ldots,N\right\},
\end{eqnarray*}
where the $n\times (N+3)$ matrix $A$ is defined as 
\begin{eqnarray*}
A_{i,j}:=\left\{\begin{array}{ll}
1 & \mbox{for} \ i=1,\ldots,n \ \text{and} \ j=1,\\
x^{(i)} & \mbox{for} \ i=1,\ldots,n \ \text{and} \ j=2,\\
\varphi_{j-3}\left(x^{(i)}\right) & \mbox{for} \ i=1,\ldots,n \ \text{and} \ j=3,\ldots,N+3.
\end{array}\right.
\end{eqnarray*}
We simulate the Gaussian vector $(\zeta,\kappa,\boldsymbol{\xi})$ with the conditional distribution defined in \eqref{intXi}, where $\Gamma^N$ is replaced by $\Gamma^N_{\text{new}}$. Then, using an improved rejection sampling \cite{Maatouk2014}, we select the nonnegative coefficients $\xi_j$. Finally, the sample paths of the conditional Gaussian process are generated by equation \eqref{convexityapproach} which satisfy both interpolation conditions and convexity constraints in the entire domain.
 \\

Now, we consider the problem dimension $d\geq 2$. For boundedness constraints,
our model can be easily extended to multidimensional cases. In the following, we are interested in studying isotonicity constraints. 

\subsection{Isotonicity in two dimensions}\label{MTD}
We now assume that the input is $\boldsymbol{x}=(x_1,x_2)\in\mathbb{R}^2$ and without loss of generality is in the unit square. The real function $f$ is supposed to be monotone (non-decreasing for example) with respect to the two input variables~:
\begin{equation*}
x_1\leq x'_1 \mbox{\quad and \quad} x_2\leq x'_2 \quad \Rightarrow \quad f(x_1,x_2)\leq f(x'_1,x'_2).
\end{equation*}
The idea is the same as the one-dimensional case. We construct the basis functions such that monotonicity constraints
are \textit{equivalent} to constraints on the coefficients. Firstly, 
we discretize the unit square (e.g. uniformly to $(N+1)^2$ knots, see below Figure~\ref{grid} for $N=7$). Secondly, on each knot we build a basis function. For instance, the basis function at the knot $(u_i,u_j)$ is defined as 
\begin{equation*}
\Phi_{i,j}(\boldsymbol{x}):=h_i(x_1)h_j(x_2),
\end{equation*}
where $h_j, \ j=0,\ldots,N$ are defined in \eqref{hj}. We have
\begin{equation*}
\Phi_{i,j}(u_k,u_{\ell})=\delta_{i,k}\delta_{j,\ell}, \qquad k,\ell=0,\ldots,N. 
\end{equation*}
\\

\begin{proposition}\label{mono2Dproposition}
Using the notations introduced before, the finite-dimensional approximation of Gaussian processes $(Y^N(\boldsymbol{x}))_{\boldsymbol{x}\in [0,1]^2}$ is defined as
\begin{eqnarray}\label{monotonicity2Dapproach}
Y^N(x_1,x_2)&:=&\sum_{i,j=0}^NY(u_i,u_j)h_i(x_1)h_j(x_2)=\sum_{i,j=0}^N\xi_{i,j}h_i(x_1)h_j(x_2),
\end{eqnarray}
where $\xi_{i,j}=Y(u_i,u_j)$ and the functions $h_j, \ j=0,\ldots,N$ are defined in \eqref{hj}. Then, we have the following properties~: \\

\begin{itemize}
\item $Y^N$ is a finite-dimensional GP with covariance function
$K_N(\boldsymbol{x},\boldsymbol{x}')=\Phi(\boldsymbol{x})^\top\Gamma^N \Phi(\boldsymbol{x}')$, where $\Phi(\boldsymbol{x})^\top=\left(h_i(x_1)h_j(x_2)\right)_{i,j}$, $\Gamma^N_{(i,j),(i',j')}=K\left((u_i,u_j),(u_{i'},u_{j'})\right)$ and $K$ is the covariance function of the original GP $Y$. \\

\item $Y^N$ converges uniformly to $Y$ when $N$ tends to infinity (with probability 1). \\

\item $Y^N$ is non-decreasing with respect to the two input variables \textit{if and only if} the $(N+1)^2$ random coefficients $\xi_{i,j}, \ i,j=0,\ldots,N$ verify the following linear constraints~: \\
{\em 
\begin{enumerate}
\item $\xi_{i-1,j}\leq \xi_{i,j} 
\mbox{ and } \xi_{i,j-1}\leq \xi_{i,j}, \ i,j=1,\ldots,N$.
\item $\xi_{i-1,0}\leq \xi_{i,0}, \ i=1,\ldots,N$.
\item $\xi_{0,j-1}\leq \xi_{0,j}, \ j=1,\ldots,N$.\\
\end{enumerate} }
\end{itemize} 
\end{proposition}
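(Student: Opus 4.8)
The plan is to follow the three-part structure of the proof of Proposition~\ref{boundaryproposition}, replacing the one-dimensional hat functions by the tensor-product basis functions $\Phi_{i,j}(\boldsymbol x)=h_i(x_1)h_j(x_2)$ and carrying the double index $(i,j)$ through the bookkeeping. For the first bullet, note that $(Y(u_i,u_j))_{0\le i,j\le N}$ is a Gaussian vector and, for each $\boldsymbol x$, $Y^N(\boldsymbol x)$ is a fixed linear combination of its entries, so $Y^N$ is a GP of dimension at most $(N+1)^2$; bilinearity of the covariance then gives
\[
\cov\!\left(Y^N(\boldsymbol x),Y^N(\boldsymbol x')\right)=\sum_{i,j,i',j'=0}^N K\!\left((u_i,u_j),(u_{i'},u_{j'})\right)h_i(x_1)h_j(x_2)h_{i'}(x_1')h_{j'}(x_2'),
\]
which is precisely $\Phi(\boldsymbol x)^\top\Gamma^N\Phi(\boldsymbol x')$ with the stated $\Gamma^N$.

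For the second bullet I would reuse the bound \eqref{boundinequality} from the one-dimensional proof. Since $h_i\ge0$ and $\sum_i h_i\equiv1$ on $[0,1]$, we have $h_i(x_1)h_j(x_2)\ge0$ and $\sum_{i,j}h_i(x_1)h_j(x_2)=1$, with the sum supported on indices for which $|x_1-u_i|\le\Delta_N$ and $|x_2-u_j|\le\Delta_N$. Writing, for $\omega$ in the probability-one event on which the path $Y(\cdot;\omega)$ is continuous,
\[
Y^N(\boldsymbol x;\omega)-Y(\boldsymbol x;\omega)=\sum_{i,j=0}^N\left(Y(u_i,u_j;\omega)-Y(\boldsymbol x;\omega)\right)h_i(x_1)h_j(x_2),
\]
and bounding exactly as in the one-dimensional case yields, for every $\boldsymbol x\in[0,1]^2$,
\[
\left|Y^N(\boldsymbol x;\omega)-Y(\boldsymbol x;\omega)\right|\le\sup\Bigl\{\left|Y(\boldsymbol x';\omega)-Y(\boldsymbol x;\omega)\right|:\ \boldsymbol x'\in[0,1]^2,\ |x_1-x_1'|\le\Delta_N,\ |x_2-x_2'|\le\Delta_N\Bigr\}.
\]
Since $[0,1]^2$ is compact the path is uniformly continuous, so the right-hand side, and hence $\sup_{\boldsymbol x\in[0,1]^2}\left|Y^N(\boldsymbol x;\omega)-Y(\boldsymbol x;\omega)\right|$, tends to $0$ as $N\to\infty$; this gives the claimed pathwise uniform convergence with probability~$1$.

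The third bullet is the substantive one. I would first observe that joint monotonicity of $Y^N$ for the product order on $[0,1]^2$ is equivalent to monotonicity in each coordinate separately, since $Y^N(x_1,x_2)\le Y^N(x_1',x_2)\le Y^N(x_1',x_2')$ whenever $x_1\le x_1'$ and $x_2\le x_2'$. Fixing $x_2$ and writing $Y^N(\cdot,x_2)=\sum_{i=0}^N c_i(x_2)h_i(\cdot)$ with $c_i(x_2):=\sum_{j=0}^N\xi_{i,j}h_j(x_2)$, the one-dimensional fact used in Proposition~\ref{boundaryproposition} and Remark~\ref{mon1DC0} (a continuous piecewise-linear interpolant is non-decreasing if and only if its successive knot values are) shows that $x_1\mapsto Y^N(x_1,x_2)$ is non-decreasing for this $x_2$ if and only if $c_{i-1}(x_2)\le c_i(x_2)$ for all $i=1,\dots,N$. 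Applying the same fact to the piecewise-linear function $x_2\mapsto c_i(x_2)-c_{i-1}(x_2)$, whose knot values are $\xi_{i,j}-\xi_{i-1,j}$, this holds for all $x_2\in[0,1]$ if and only if $\xi_{i-1,j}\le\xi_{i,j}$ for every $j=0,\dots,N$. Hence monotonicity in $x_1$ (for all $x_2$) is equivalent to $\xi_{i-1,j}\le\xi_{i,j}$ for $i=1,\dots,N$, $j=0,\dots,N$ and, by symmetry, monotonicity in $x_2$ (for all $x_1$) is equivalent to $\xi_{i,j-1}\le\xi_{i,j}$ for $i=0,\dots,N$, $j=1,\dots,N$; splitting these two families according to whether an index equals $0$ reproduces exactly constraints~(1)--(3). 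The converse is immediate: since $\Phi_{i,j}(u_k,u_\ell)=\delta_{i,k}\delta_{j,\ell}$ we have $Y^N(u_k,u_\ell)=\xi_{k,\ell}$, so if $Y^N$ is non-decreasing then evaluation at neighbouring knots forces $\xi_{i-1,j}\le\xi_{i,j}$ and $\xi_{i,j-1}\le\xi_{i,j}$.

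I expect the one genuinely delicate point to be the ``for all $x_2\in[0,1]$'' quantifier in the reduction above, i.e.\ passing from a pointwise inequality between two piecewise-linear functions of $x_2$ to the inequality at the $N+1$ knots alone; but this is the very same linear-interpolation argument already invoked in the proof of Proposition~\ref{boundaryproposition}, so no new tool is required, and everything else is a routine adaptation of the one-dimensional arguments to the double index $(i,j)$.
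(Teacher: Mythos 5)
Your proof is correct and follows essentially the same route as the paper's own (very terse) argument: the first two bullets are direct adaptations of the one-dimensional case, and the third rests on piecewise linearity of $Y^N$ along the $x_1$ and $x_2$ directions together with knot evaluation $Y^N(u_k,u_\ell)=\xi_{k,\ell}$ for the converse. Your version merely makes explicit the double application of the one-dimensional knot-value characterization (first in $x_1$ for fixed $x_2$, then in $x_2$ for the differences $c_i-c_{i-1}$), a detail the paper asserts without elaboration.
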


\noindent From the last property, the problem is reduced to simulate the Gaussian vector $\boldsymbol{\xi}=(\xi_{i,j})_{i,j}$ restricted to the convex set $I_{\boldsymbol{\xi}}\cap C_{\boldsymbol{\xi}}$, where 
\begin{eqnarray*}
&&I_{\boldsymbol{\xi}}=\left\{\boldsymbol{\xi}\in\mathbb{R}^{(N+1)^2}~: \ Y^N\left(x_1^{(i)},x_2^{(i)}\right)=\sum_{i,j=0}^N\xi_{i,j}h_i\left(x_1^{(i)}\right)h_j\left(x_2^{(i)}\right)=y_i\right\}, \\
&&C_{\boldsymbol{\xi}}=\left\{\boldsymbol{\xi}\in\mathbb{R}^{(N+1)^2} \mbox{such that $\xi_{i,j}$ verify the constraints 1. 2. and 3.}\right\}. \\
\end{eqnarray*}

\begin{proof}[Proof of Proposition~\ref{monotonicity2Dapproach}]
The proof of the first two properties is similar to the one-dimensional case. Now,
if the $(N+1)^2$ coefficients $\xi_{i,j},\ i,j=0,\cdots,N$ verify the above linear constraints 1. 2. and 3. then $Y^N$ is non-decreasing since $Y^N$ is a piecewise linear function for $x_1$ or $x_2$ directions. Conversely, if $Y^N$ is non-decreasing then $Y^N(u_i,u_j)=\xi_{i,j}, \ i,j=0,\ldots,N$ satisfy the constraints 1. 2. and 3. . \qed
\end{proof} 

\begin{remark}[Isotonicity in two dimensions with respect to one variable]\label{rem2D}
{\rm If the function is non-decreasing with respect to the first variable only, then the proposed GP defined as
\begin{equation}\label{monotonicity2Dx1}
Y^N(x_1,x_2):=\sum_{i,j=0}^NY(u_i,u_j)h_i(x_1)h_j(x_2)=
\sum_{i,j=0}^N\xi_{i,j}h_i(x_1)h_j(x_2),
\end{equation}
is non-decreasing with respect to $x_1$ \textit{if and only if} the random coefficients $\xi_{i-1,j}\leq \xi_{i,j}, \ i=1,\ldots,N \ \mbox{and} \ j=0,\ldots,N$. }
\end{remark}

\subsection{Isotonicity in multidimensional cases}\label{MMC}
The d-dimensional case is a simple extension of the two-dimensional case.
The finite-dimensional approximation of Gaussian processes $Y^N$ can be written as 
\begin{equation*}
Y^N(\boldsymbol{x}):=\sum_{i_1,\ldots,i_d=0}^N Y(u_{i_1},\ldots,u_{i_d})\prod_{\sigma\in\{1,\ldots,d\}}h_{i_{\sigma}}(x_{\sigma})
=\sum_{i_1,\ldots,i_d=0}^N \xi_{i_1,\ldots,i_d}\prod_{\sigma\in\{1,\ldots,d\}}h_{i_{\sigma}}(x_{\sigma}),
\end{equation*} 
where $\xi_{i_1,\ldots,i_d}=Y(u_{i_1},\ldots,u_{i_d})$. Remark~\ref{rem2D} can be extended as well, for the case of a monotonicity with respect to a subset of variables. For instance, the monotonicity of $Y^N$ with respect to the $\ell^{\text{th}}$ dimension input $x_{\ell}$ is \textit{equivalent} to the fact that $\xi_{i_1,\ldots,i_{\ell}-1,\ldots,i_d}\leq \xi_{i_1,\ldots,i_{\ell},\ldots,i_d}, \ i_{\ell}=1,\ldots,N$ and $i_1,\ldots,i_{\ell-1},i_{\ell+1},\ldots,i_d=0,\ldots,N$. \\
 
\subsection{Simulation of GPs conditionally to equality and inequality constraints}
For the sake of simplicity and without loss of generality, we suppose that the proposed finite-dimensional approximation of GPs is of the form 
\begin{equation*}
Y^N(\boldsymbol{x})=\sum_{j=0}^N\xi_j\phi_j(\boldsymbol{x}), \qquad \boldsymbol{x}\in \mathbb{R}^d,
\end{equation*} 
where $\boldsymbol{\xi}=(\xi_0,\ldots,\xi_N)^\top$ is a zero-mean Gaussian vector with covariance matrix $\Gamma^N$ and $\phi=(\phi_0,\ldots,\phi_N)^\top$ are deterministic basis functions. For instance, the constant term $Y(0)$ in model \eqref{monotonicityapproach} can be written as $\xi_0\phi_0(x)$, where $\phi_0(x)=1$.
The space of interpolation conditions is $I_{\boldsymbol{\xi}}=\left\{\boldsymbol{\xi}\in\mathbb{R}^{N+1}~:  \sum_{j=0}^N\xi_j\phi_j\left(\boldsymbol{x}^{(i)}\right)=y_i, \ i=1,\ldots,n\right\}$ and the set of inequality constraints $C_{\boldsymbol{\xi}}$ is a convex set (for instance, the nonnegative quadrant $\xi_j\geq 0, \ j=0,\ldots,N$ for non-decreasing constraints in one dimension). We are interested in the calculation of the mean, mode (maximum \textit{a posteriori}) of $Y^N$ conditionally to $\boldsymbol{\xi}\in I_{\boldsymbol{\xi}}\cap C_{\boldsymbol{\xi}}$ and in the quantification of prediction intervals. Note that their analytical forms except for the mode are not easy to find, hence we need simulation. As explained in Sect.~\ref{BPCOD}, the problem is reduced to simulate the Gaussian vector $\boldsymbol{\xi}=(\xi_0,\ldots,\xi_N)^\top$ restricted to convex sets. In that case, several algorithms can be used (see e.g. \cite{Botts}, \cite{Chopin2011FST19607241960748}, \cite{Geweke91efficientsimulation}, \cite{756335}, \cite{Maatouk2014}, \cite{journals/sac/PhilippeR03} and \cite{Robert}). \\

In this section, we introduce some notations that will be used in Sect.~\ref{illustrative}, and emphasize the two cases of truncated simulations. We note $\boldsymbol{\xi}_{\text{I}}$ the mean of $\boldsymbol{\xi}$ conditionally to $\boldsymbol{\xi}\in I_{\boldsymbol{\xi}}$ without inequality constraints (see equation \eqref{intXi}). Then by linearity of the conditional expectation, the so-called usual (unconstrained) Kriging mean is equal to 
\begin{eqnarray*}
m^N_{\text{K}}(\boldsymbol{x}):=\mathds{E}\left(Y^N(\boldsymbol{x}) \suchthat Y^N\left(\boldsymbol{x}^{(i)}\right)=y_i, \ i=1,\ldots,n\right)=\sum_{j=0}^N(\boldsymbol{\xi}_{\text{I}})_j\phi_j(\boldsymbol{x}),
\end{eqnarray*} 
where $\boldsymbol{\xi}_{\text{I}}=\mathds{E}\left(\boldsymbol{\xi} \suchthat \boldsymbol{\xi}\in I_{\xi}\right)=\Gamma^NA^\top\left(A\Gamma^NA^\top\right)^{-1}\boldsymbol{y}\in\mathbb{R}^{N+1}$ and the matrix $A$ is formed by the values of the basis functions at the observations (i.e. $A_{i,j}=\phi_j\left(x^{(i)}\right)$). Similarly to the Kriging mean of the original GP (see equation \eqref{condGP}), the Kriging mean $m_{\text{K}}^N$ of the finite-dimensional approximation of GPs $Y^N$ can be written as
\begin{equation*}
m_{\text{K}}^N(\boldsymbol{x})=\boldsymbol{k}_N(\boldsymbol{x})^\top\mathbb{K}_N^{-1}\boldsymbol{y},
\end{equation*}
where $\boldsymbol{k}_N(\boldsymbol{x})=\left(K_N\left(\boldsymbol{x},\boldsymbol{x}^{(i)}\right)\right)_i=\left(A\Gamma^N\phi(\boldsymbol{x})\right)$ is the vector of covariance between $Y^N(\boldsymbol{x})$ and $Y^N\left(\boldsymbol{X}\right)$ and $(\mathbb{K}_N)_{i,j}=K_N\left(\boldsymbol{x}^{(i)},\boldsymbol{x}^{(j)}\right)_{i,j}=\left(A\Gamma^NA^\top\right)$, $i,j=1,\ldots,n$ is the covariance matrix of $Y^N\left(\boldsymbol{X}\right)=\boldsymbol{y}$. \\
 
\begin{definition}\label{IKMdef}
Denote {\em $\boldsymbol{\xi}_{\text{C}}$} as the mean of the Gaussian vector $\boldsymbol{\xi}$ restricted to $I_{\boldsymbol{\xi}}\cap C_{\boldsymbol{\xi}}$ (i.e. the posterior mean). Then, the inequality Kriging mean (mean \textit{a posteriori}) is defined as
{\em \begin{equation*}
m^N_{\text{IK}}(\boldsymbol{x}):=\mathds{E}\left(Y^N(\boldsymbol{x})\suchthat Y^N\left(\boldsymbol{x}^{(i)}\right)=y_i,
\ \boldsymbol{\xi}\in C_{\boldsymbol{\xi}}\right)=\sum_{j=0}^N(\boldsymbol{\xi}_{\text{C}})_j\phi_j(\boldsymbol{x}),
\end{equation*}}
where {\em $\boldsymbol{\xi}_{\text{C}}=\mathds{E}\left(\boldsymbol{\xi} \suchthat \boldsymbol{\xi}\in I_{\boldsymbol{\xi}}\cap C_{\boldsymbol{\xi}}\right)$}.
\end{definition}

\noindent Finally, let $\mu$ be the maximum of the probability density function (pdf) of $\boldsymbol{\xi}$ restricted to $I_{\boldsymbol{\xi}}\cap C_{\boldsymbol{\xi}}$. It is the solution of the following convex optimization problem
\begin{equation}\label{mu}
\mu:=\arg\min_{\boldsymbol{c}\in I_{\boldsymbol{\xi}}\cap C_{\boldsymbol{\xi}}}\left(\frac{1}{2}\boldsymbol{c}^\top\left(\Gamma^N\right)^{-1}\boldsymbol{c}\right),
\end{equation}
where $\Gamma^N$ is the covariance matrix of the Gaussian vector $\boldsymbol{\xi}$.
In fact, $\mu$ corresponds to the mode\footnote{The maximum of the probability density function.} of the Gaussian vector $\boldsymbol{\xi}$ restricted to $I_{\boldsymbol{\xi}}\cap C_{\boldsymbol{\xi}}$ and its numerical calculation is a standard problem in the minimization of positive quadratic forms subject
to convex constraints, see e.g. \cite{Boyd2004CO993483} and \cite{Goldfarb83}. Let us mention that in all simulation examples illustrated in this paper, the R Package `solve.QP' described in \cite{goldfarb1982dual} and \cite{Goldfarb83} is used to compute the mode of the truncated Gaussian vector (i.e. to solve the quadratic convex optimization problem \eqref{mu}).\\ 

\begin{definition}\label{Mdef}
The so-called inequality mode or Maximum \textit{A Posteriori} (MAP) of the finite-dimensional approximation of GPs $Y^N$ conditionally to given data and inequality constraints is equal to
{\em \begin{equation*}
M^N_{\text{IK}}(\boldsymbol{x}):=\sum_{j=0}^N\mu_j\phi_j(\boldsymbol{x}),\qquad \boldsymbol{x}\in\mathbb{R}^d,
\end{equation*} }
where $\mu=(\mu_0,\ldots,\mu_N)^\top$ is defined in \eqref{mu}. \\
\end{definition}

\begin{remark}
The inequality mode {\em $M^N_{\text{IK}}$} defined in Definition~\ref{Mdef} does not depend on the variance parameter $\sigma$ of the covariance function $K$ since the vector $\mu$ and the basis functions $\phi_j$ do not depend on it as well. Also, it does not depend on the simulation but on the length hyper-parameters of the covariance function $\boldsymbol{\theta}=(\theta_1,\ldots,\theta_d)$.  \\
\end{remark}

\begin{remark}\label{correspondRemark}
The inequality mode or MAP (Maximum \textit{A Posteriori}) of the conditional GP {\em $M^N_{\text{IK}}$} converges uniformly to the \textit{constrained} interpolation function defined as the solution of the following convex optimization problem~:
\begin{equation*}
\arg\min_{h\in H\cap I\cap C}\|h\|_H^2,
\end{equation*}
where $H$ is a Reproducing Kernel Hilbert Space (RKHS) associated to the positive type kernel $K$ \cite{aronszajn1950}, $I$ is the set of functions verify interpolation conditions and the convex set $C$ is the space of functions which verify the inequality constraints (see \cite{bayhal-01136466}, \cite{2016arXiv160202714B} and \cite{Maatoukthesis2015} for more details). \

This extends to the case of interpolation conditions and inequality constraints the correspondence established by Kimeldorf and Wahba \cite{KW1970} between Bayesian estimation on stochastic process and smoothing by splines.  \\
\end{remark}

\begin{figure}[hptb]
\begin{minipage}{.5\linewidth}
\centering
\subfloat[]{\label{cas1}\includegraphics[scale=.31]{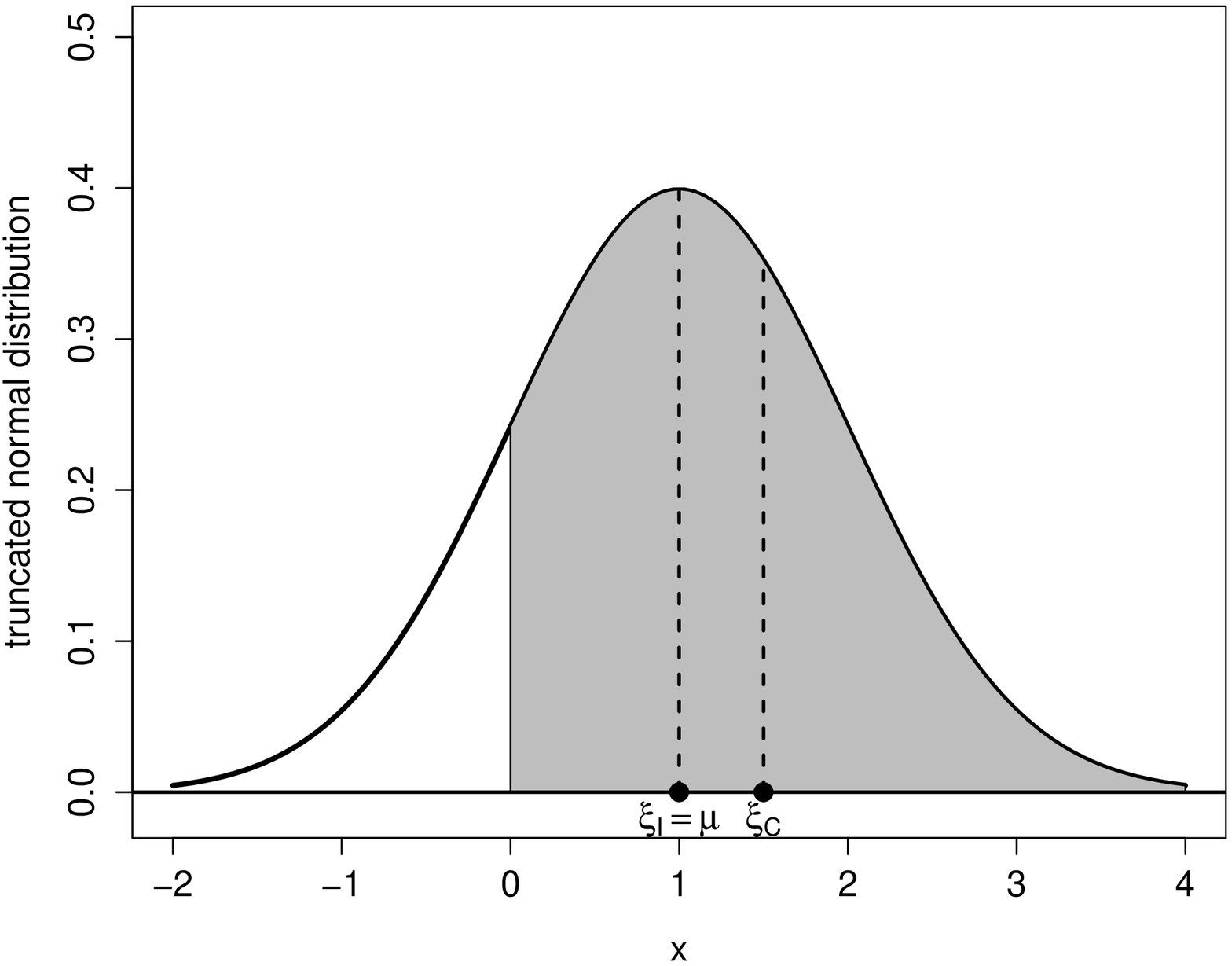}}
\end{minipage}%
\begin{minipage}{.5\linewidth}
\centering
\subfloat[]{\label{cas2}\includegraphics[scale=.31]{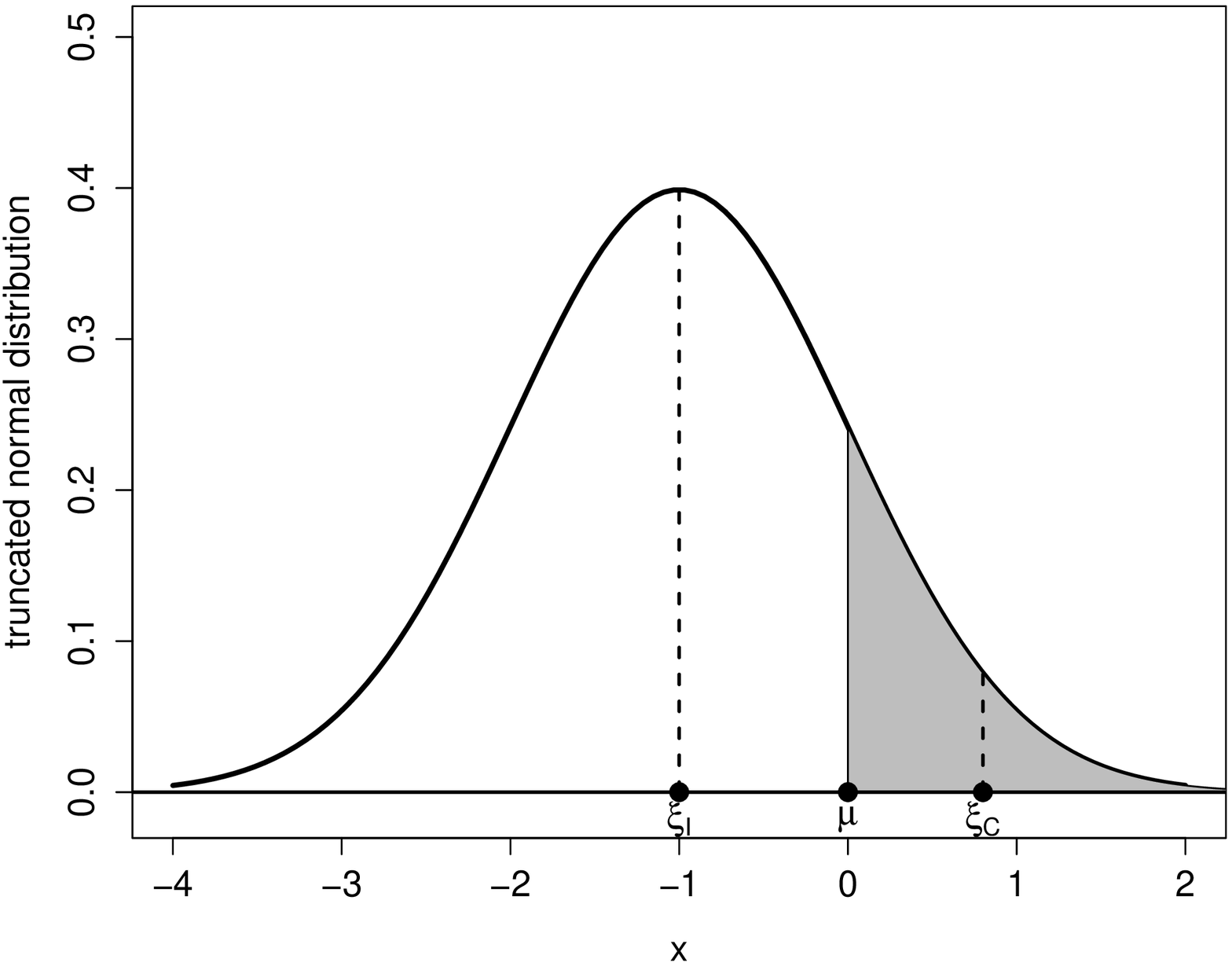}}
\end{minipage}
\caption{Two cases of truncated normal variables. The mean is inside (respectively outside) the acceptance region Figure~\ref{cas1} (respectively Figure~\ref{cas2}).} 
\label{fig:main}
\end{figure}

In practice, we have two cases in the simulation of truncated multivariate normal distributions (see Figures~\ref{cas1} and \ref{cas2} for example in one dimension). In the first case (Figure~\ref{cas1}), we have $\boldsymbol{\xi}_{\text{I}}=\mu$ and so $m^N_{\text{K}}=M^N_{\text{IK}}$ and they are different from $m^N_{\text{IK}}$. In this case, $\boldsymbol{\xi}_{\text{I}}$ is inside $C_{\boldsymbol{\xi}}$ (for instance the nonnegative quadrant) and the usual (unconstrained) Kriging mean respects the inequality constraints. The second one, where the three are different (Figure~\ref{cas2}). In this case, $\boldsymbol{\xi}_{\text{I}}$ is outside $C_{\boldsymbol{\xi}}$ and the usual (unconstrained) Kriging mean does not respect the inequality constraints. \\

\section{Simulation study}\label{SimStudy}
The aim of this section is to illustrate the performance of the proposed model in terms of prediction
and uncertainty quantification. To do this, we take the real increasing function $f(x)=\log(20x+1)$ used in \cite{golchi2015monotone} (black lines in Figure~\ref{GolchiExample}). Suppose that $f$ is evaluated at $X=(0,0.1,0.2,0.3,0.4,0.9,1)$. As mentioned in \cite{golchi2015monotone}, this is a challenge situation for unconstrained GP since we have a large gap between the fifth and sixth design points (i.e. $0.4<x<0.9$). In Figure~\ref{krigingGolchi}, the sample paths are taken from unconstrained GP using the Mat\'ern 5/2 covariance function (see Table~\ref{kernel}), where the hyper-parameters $\sigma$ and $\theta$ are estimated by the Maximum Likelihood Estimator (MLE) \cite{RoustantGinsbourgerDeville2012JSSOBKv51i01}. Notice that the simulated paths are not monotone and the prediction interval is quite large between 0.4 and 0.9 (Figure~\ref{krigingGolchi1}). In Figure~\ref{Golchi1D1}, prediction intervals and inequality mode taken from model \eqref{monotonicityapproach} conditionally to given data and monotonicity constraints are shown. The Mat\'ern 5/2 covariance function is used. Applying a suited cross validation method to estimate covariance hyper-parameters \cite{2016arXiv160402237C} and \cite{Maatouk201538}, we get $\sigma=335.5$ and $\theta=4.7$. The predictive uncertainty is reduced (Figure~\ref{Golchi1D1}). Furthermore, contrarily to the model described in \cite{golchi2015monotone}, we do not need to add derivative points to ensure monotonicity constraints in the entire domain since the condition simulation of the finite-dimensional approximation of Gaussian processes is \textit{equivalent} to the simulation of a Gaussian vector restricted to convex sets. Finally, in \cite{golchi2015monotone}, the posterior mean is used as a predictive estimator whereas two estimators are computed by the methodology described in this paper (inequality mean and mode of the posterior distribution). Moreover, the last one (inequality mode) can be seen as the \textit{constrained interpolation function}, and then generalizes the correspondence established by Kimeldorf and Wahba \cite{KW1970} for \textit{constrained} interpolation (see \cite{2016arXiv160202714B} and \cite{Maatoukthesis2015}).

\begin{figure}[hptb]
\begin{minipage}{.5\linewidth}
\centering
\subfloat[]{\label{krigingGolchi}\includegraphics[scale=.3]{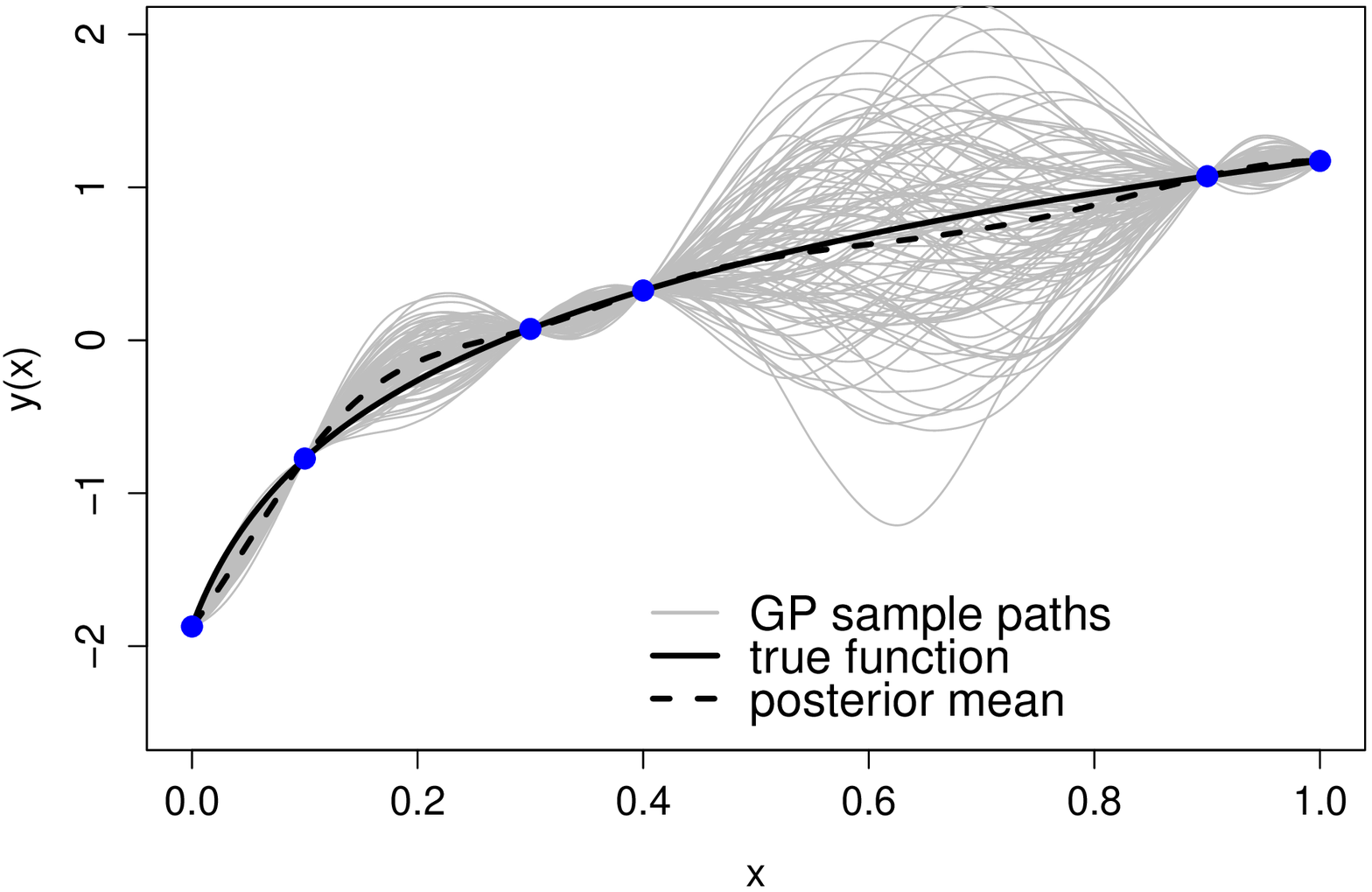}}
\end{minipage}%
\begin{minipage}{.5\linewidth}
\centering
\subfloat[]{\label{krigingGolchi1}\includegraphics[scale=.3]{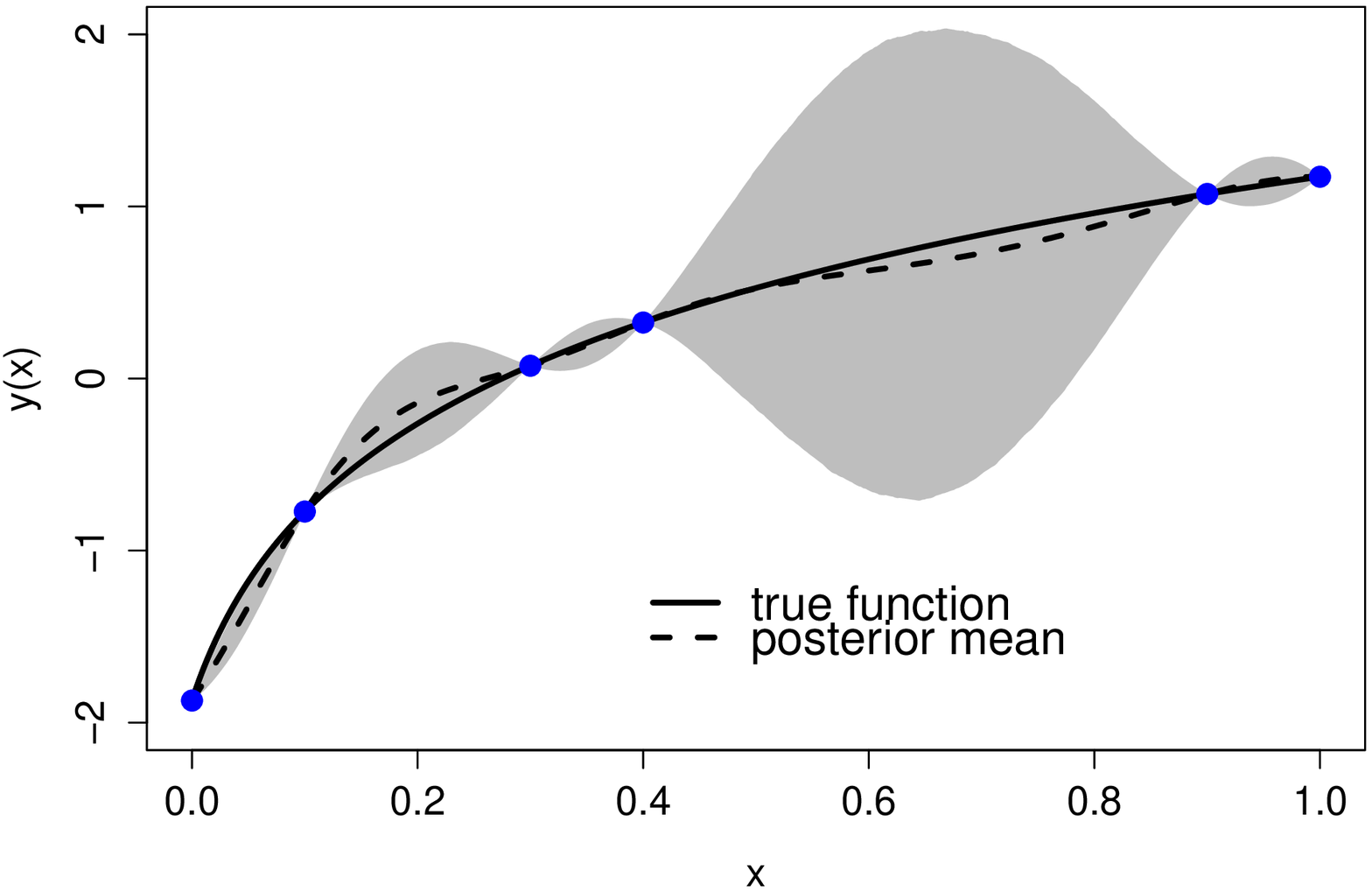}}
\end{minipage}\par\medskip
\centering
\subfloat[]{\label{Golchi1D1}\includegraphics[scale=.3]{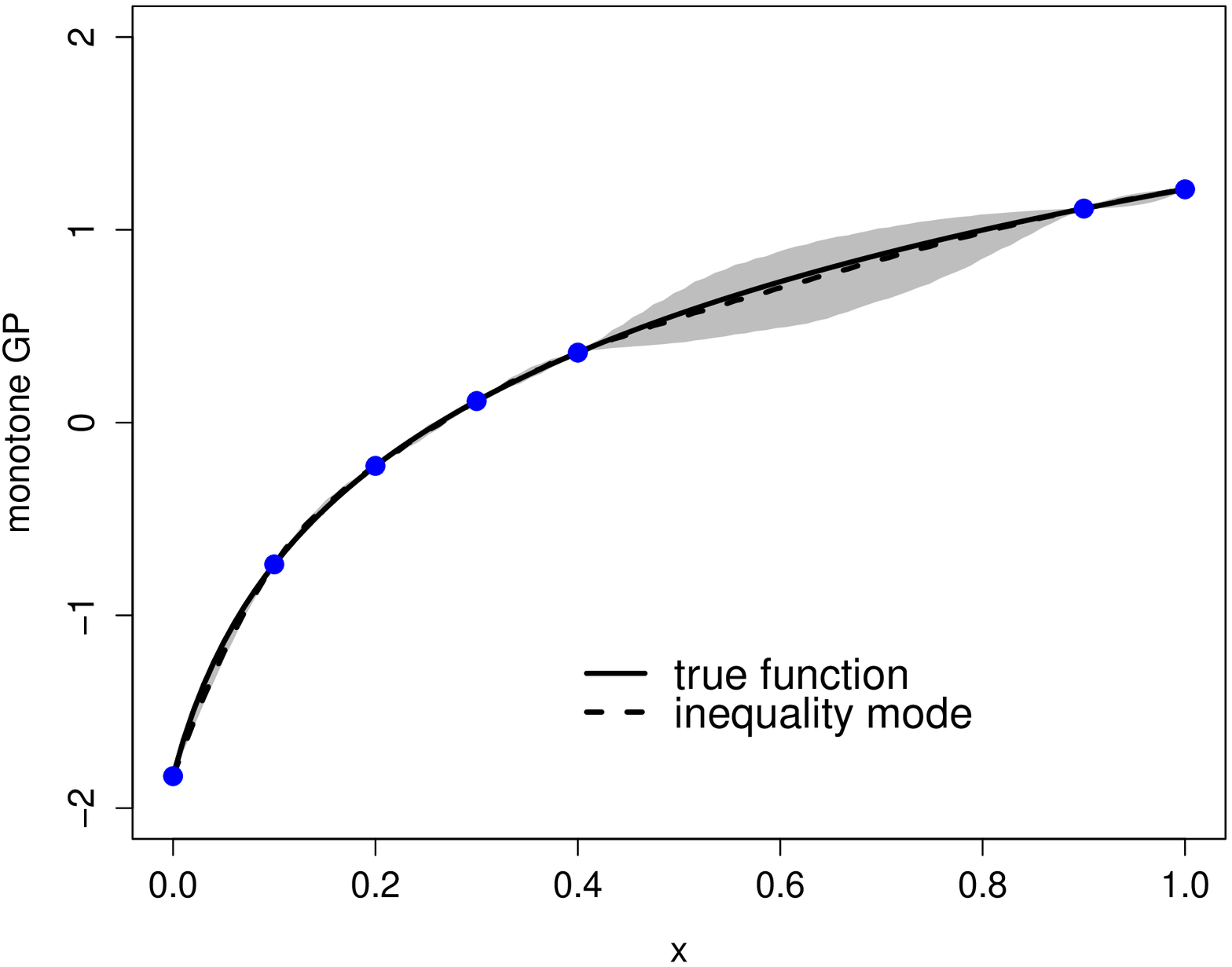}}
\caption{The real function evaluated at seven design points used in \cite{golchi2015monotone}~: (a) 100 sample paths taken from unconstrained GP with posterior mean, (b) posterior mean and 95\% prediction intervals from unconstrained GP, and (c) the inequality mode and 95\% prediction intervals from model \eqref{monotonicityapproach} conditionally to both interpolation conditions and monotonicity constraints.}
\label{GolchiExample}
\end{figure}

\section{Illustrative examples}\label{illustrative}
The aim of this section is to illustrate the proposed method with certain constraints such as boundedness, monotonicity and convexity and to show the difference between prediction functions (unconstrained Kriging mean, inequality Kriging mean and inequality mode). The simulation results are obtained by using Gaussian and Mat\'ern 3/2 covariance functions, where the \textit{constrained} evaluations are not taken from \textit{constrained} functions. We consider first one-dimension monotonicity, boundedness and convexity constraints examples. In two dimensions, we consider the monotonicity (non-decreasing) case with respect to the two input variables and to only one variable.

\subsection{Monotonicity in one dimensional case}
We begin with two monotonicity examples in one-dimension (Figure~\ref{dim1}). In Figure~\ref{dim12}, the 11 design points are given by $X=(0,0.05, 0.1, 0.3, 0.4,0.45, 0.5, 0.8, 0.85, 0.9, 1)$ and the corresponding output $\boldsymbol{y}=(0,0.6, 1.1, 5.5,\\
7.2, 8, 9.1, 15, 16.3, 17, 20)$. We choose $N=50$ and generate 40 sample paths taken from model \eqref{monotonicityapproach} conditionally to given data and monotonicity (non-decreasing) constraints $(Y'(u_j)\geq 0, \ j=0,\ldots,N)$. The Gaussian covariance function is used with the hyper-parameters $(\sigma^2,\theta)$ fixed to $(20^2,0.14)$. Notice that the simulated paths (gray lines) are non-decreasing in the entire domain, as well as the increasing Kriging mean $m^N_{\text{IK}}$ (solid line). The usual (unconstrained) Kriging mean $m^N_{\text{K}}$ and the inequality mode $M^N_{\text{IK}}$ (dash-dotted line) coincide and are also non-decreasing. This is because $\boldsymbol{\xi}_{\text{I}}$ is inside the acceptance region $C_{\boldsymbol{\xi}}$. In Figure~\ref{dim11}, the input is $X=(0,0.3,0.4,0.5, 0.9)$ and the corresponding output is $\boldsymbol{y}=(0, 4,6,6.6, 10)$. Again, the Gaussian covariance is used with the parameters $(\sigma^2,\theta)$ fixed to $(20^2,0.25)$. The increasing Kriging mean (solid line) and the inequality mode satisfy monotonicity (non-decreasing) constraints, contrarily to the usual (unconstrained) Kriging mean (dash-dotted line)~: it corresponds to the situation where $\boldsymbol{\xi}_{\text{I}}$ lies outside the acceptance region $C_{\boldsymbol{\xi}}$.

\begin{figure}[hptb]
\begin{minipage}{.5\linewidth}
\centering
\subfloat[]{\label{dim12}\includegraphics[scale=.3]{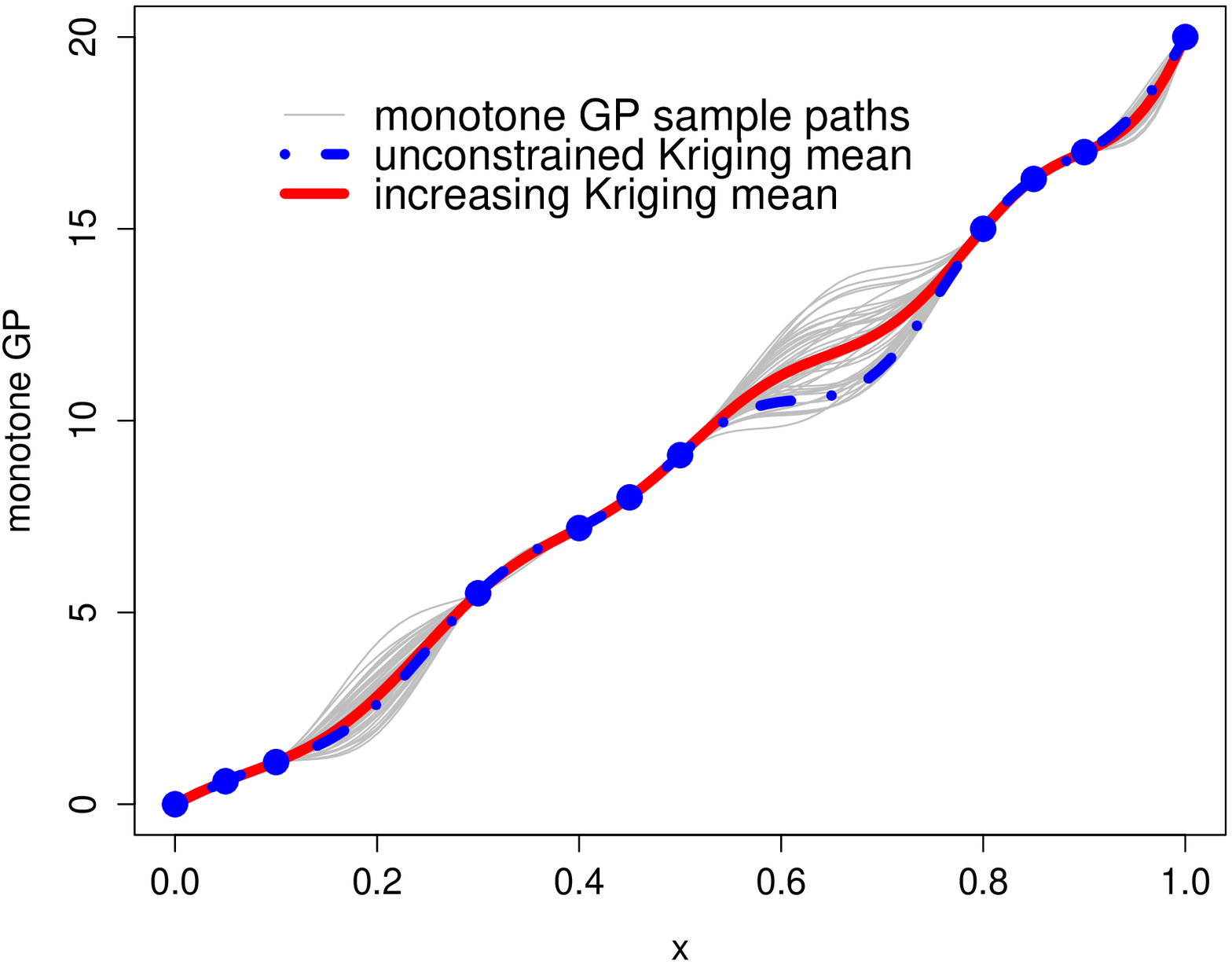}}
\end{minipage}%
\begin{minipage}{.5\linewidth}
\centering
\subfloat[]{\label{dim11}\includegraphics[scale=.3]{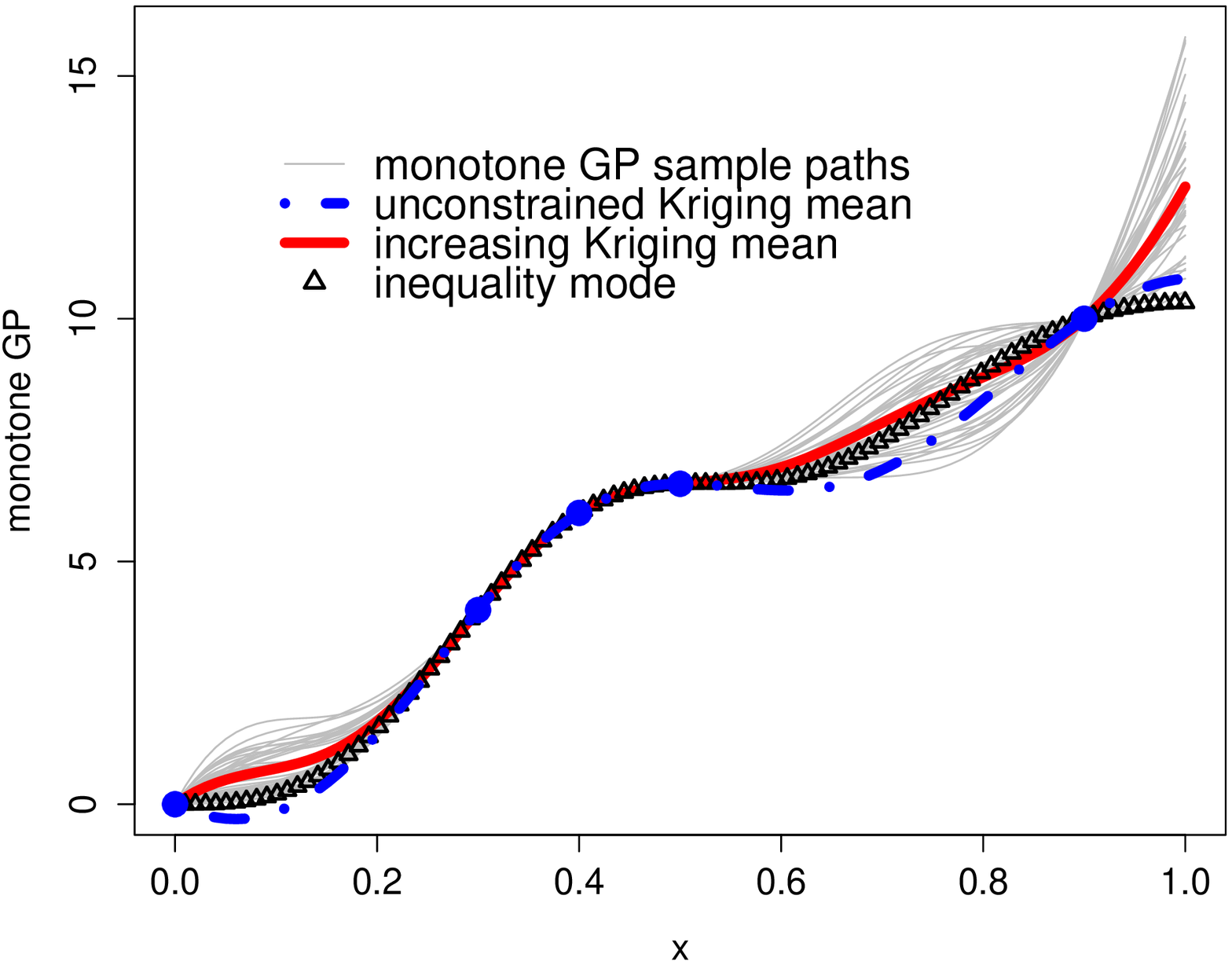}}
\end{minipage}
\caption{Simulated paths drawn from model \eqref{monotonicityapproach} respecting non-decreasing constraints in the entire domain.
The usual Kriging mean (dash-dotted line) coincides with the inequality mode and respects the monotonicity in Figure~\ref{dim12}, but not in Figure~\ref{dim11}.}
\label{dim1}
\end{figure}

\subsection{Monotonicity of continuous but non-derivable functions}
The \textit{constrained} evaluations are given by the input vector $X=(0.1, 0.2, 0.3, 0.6, 0.9, 1)$ and the corresponding output $\boldsymbol{y}=(-1, 1, 2, 3, 4.5, 8)$ (Figure~\ref{simmon1DC0}). 
We choose $N=50$ then we have $51$ knots and we generate 40 sample paths drawn from the finite-dimensional approximation of GPs defined in \eqref{boundaryapproach} conditionally to data interpolation and monotonicity constraints given in Remark~\ref{mon1DC0}. The Mat\'ern 5/2 covariance function is used with the parameters fixed to $(\sigma^2,\theta)=(40^2,1.2)$. The sample paths (gray solid lines) are continuous (non-derivable) and non-decreasing in the entire domain, contrarily to the usual (unconstrained) Kriging mean. The inequality mode (maximum \textit{a posteriori}) and the increasing Kriging mean (mean \textit{a posteriori}) verify monotonicity (non-decreasing) constraints in the entire domain. It corresponds to the situation where $\boldsymbol{\xi}_{\text{I}}$ lies outside the acceptance region $C_{\boldsymbol{\xi}}$.

\begin{figure}[hptb]
\centering
\includegraphics[scale=.4]{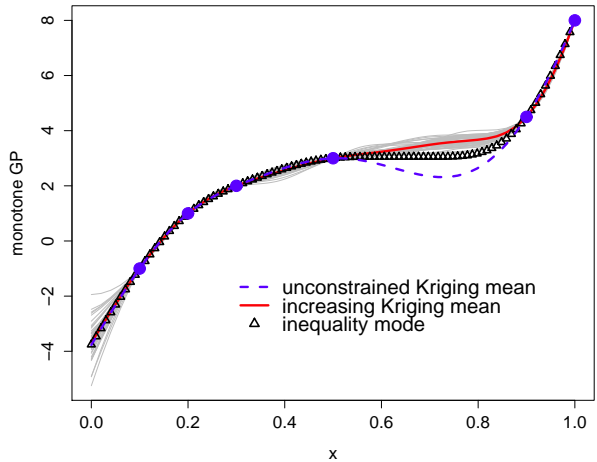}
\caption{Simulated paths drawn from model \eqref{boundaryapproach} using Remark~\ref{mon1DC0}.
Notice that the simulated paths are continuous (non-derivable) and non-decreasing in the entire domain.}
\label{simmon1DC0}
\end{figure}

\subsection{Boundedness constraints in one dimensional case}
Now, we consider the positive and boundedness constraints (Figure~\ref{posbond}). We choose $N=50$ and generate 100 sample paths taken from the finite-dimensional approximation defined in \eqref{boundaryapproach} conditionally to given data and boundedness constraints. In both figures, the Gaussian covariance function is used with the parameters $(\sigma^2,\theta)=(4^2,0.13)$ (Figure~\ref{positive}) and $(\sigma^2,\theta)=(25^2,0.2)$ (Figure~\ref{bornee}). In Figure~\ref{positive}, $\boldsymbol{\xi}_{\text{I}}$ is inside the acceptance region and the usual (unconstrained) Kriging mean coincides with the inequality mode and respects boundedness constraints, contrarily to Figure~\ref{bornee}, where $\boldsymbol{\xi}_{\text{I}}$ lies outside the acceptance region. Notice that the simulated paths satisfy the inequality constraints in the entire domain (nonnegative (Figure~\ref{positive})) and are bounded between -20 and 20 (Figure~\ref{bornee}). From Figure~\ref{bornee}, one can remark that the degree of smoothness of the inequality mode is related to one of the covariance function $K$ of the original GP, see Remark~\ref{correspondRemark}.

\begin{figure}[hptb]
\begin{minipage}{.5\linewidth}
\centering
\subfloat[]{\label{positive}\includegraphics[scale=.3]{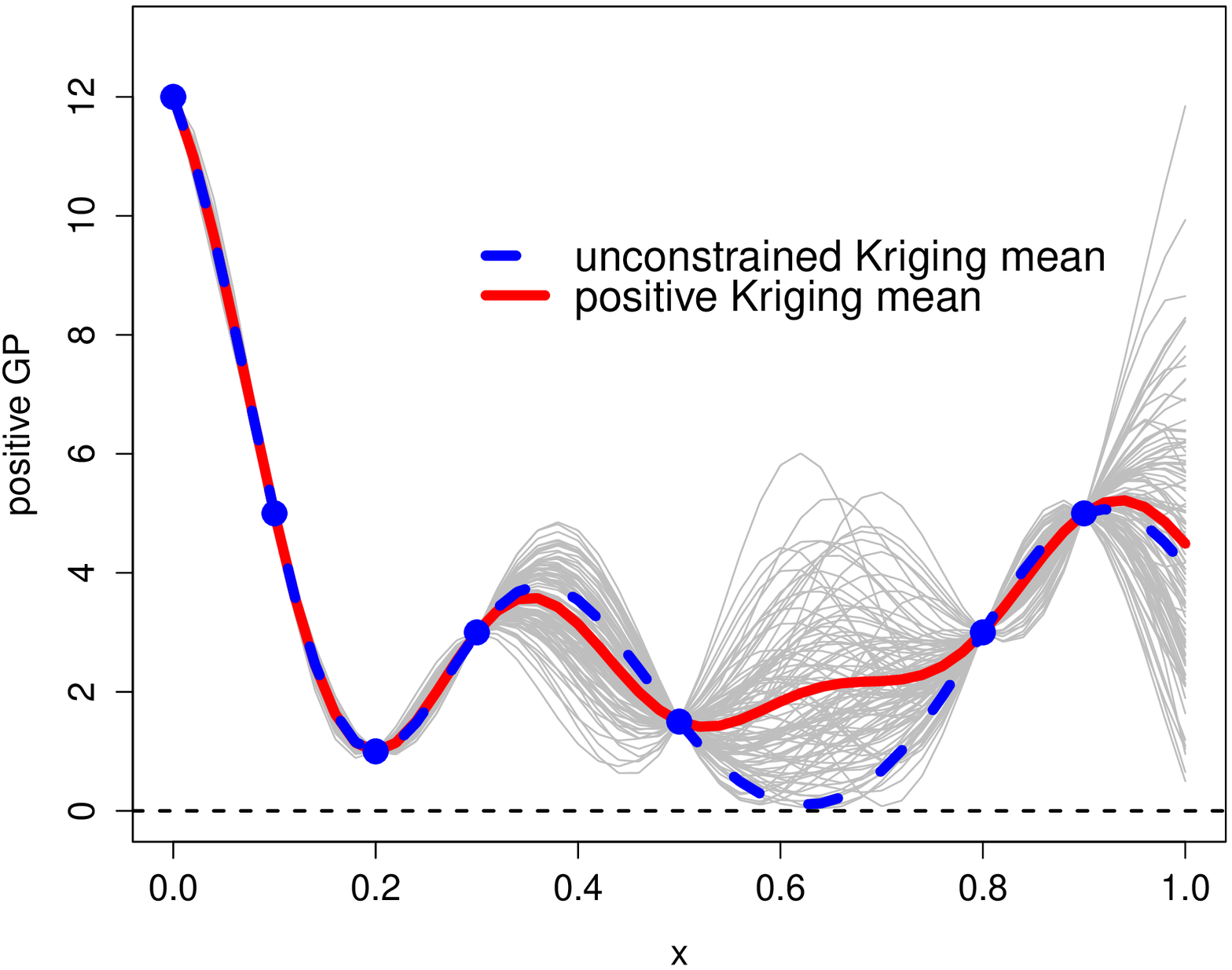}}
\end{minipage}%
\begin{minipage}{.5\linewidth}
\centering
\subfloat[]{\label{bornee}\includegraphics[scale=.3]{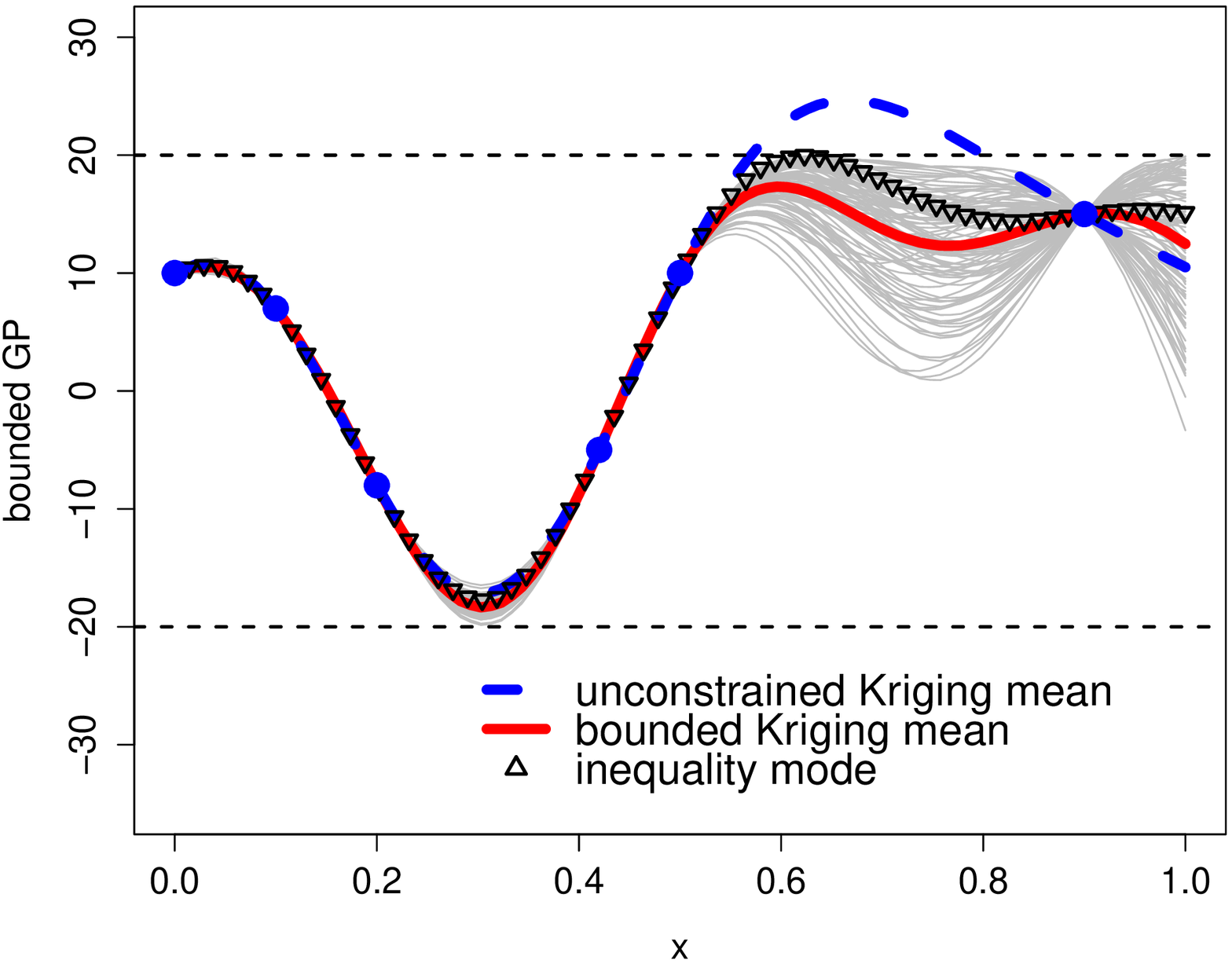}}
\end{minipage}
\caption{Simulated paths drawn from model \eqref{boundaryapproach} respecting positivity constraints (Figure~\ref{positive}) and boundedness constraints (Figure~\ref{bornee}). The usual (unconstrained) Kriging mean and the inequality mode coincide in Figure~\ref{positive}, but not in Figure~\ref{bornee}.}
\label{posbond}
\end{figure}

\subsection{Convexity constraints in one dimensional case}
The \textit{constrained} evaluations in Figure~\ref{convex} are given by $X=(0,0.05,0.2,0.5,0.85,0.95)$ and the corresponding output $\boldsymbol{y}=(20, 15, 3, -5, 7, 15)$. We choose $N=50$ and generate 25 sample paths taken from model \eqref{convexityapproach} conditionally to given data and convexity constraints $(\xi_j\geq 0, \ j=0,\ldots,N)$. The Gaussian covariance function is used with the parameters fixed to $(\sigma^2,\theta)=(10^2,0.2)$. The simulated paths, the inequality mode (maximum \textit{a posteriori}) and the convex Kriging mean (mean \textit{a posteriori}) are convex in the entire domain, contrarily to the usual (unconstrained) Kriging mean (dash-dotted line). It corresponds to the situation where $\boldsymbol{\xi}_{\text{I}}$ lies outside the acceptance region $C_{\boldsymbol{\xi}}$.

\begin{figure}[hptb]
\begin{minipage}{.5\linewidth}
\centering
\subfloat[]{\label{convex6pts0}\includegraphics[scale=.3]{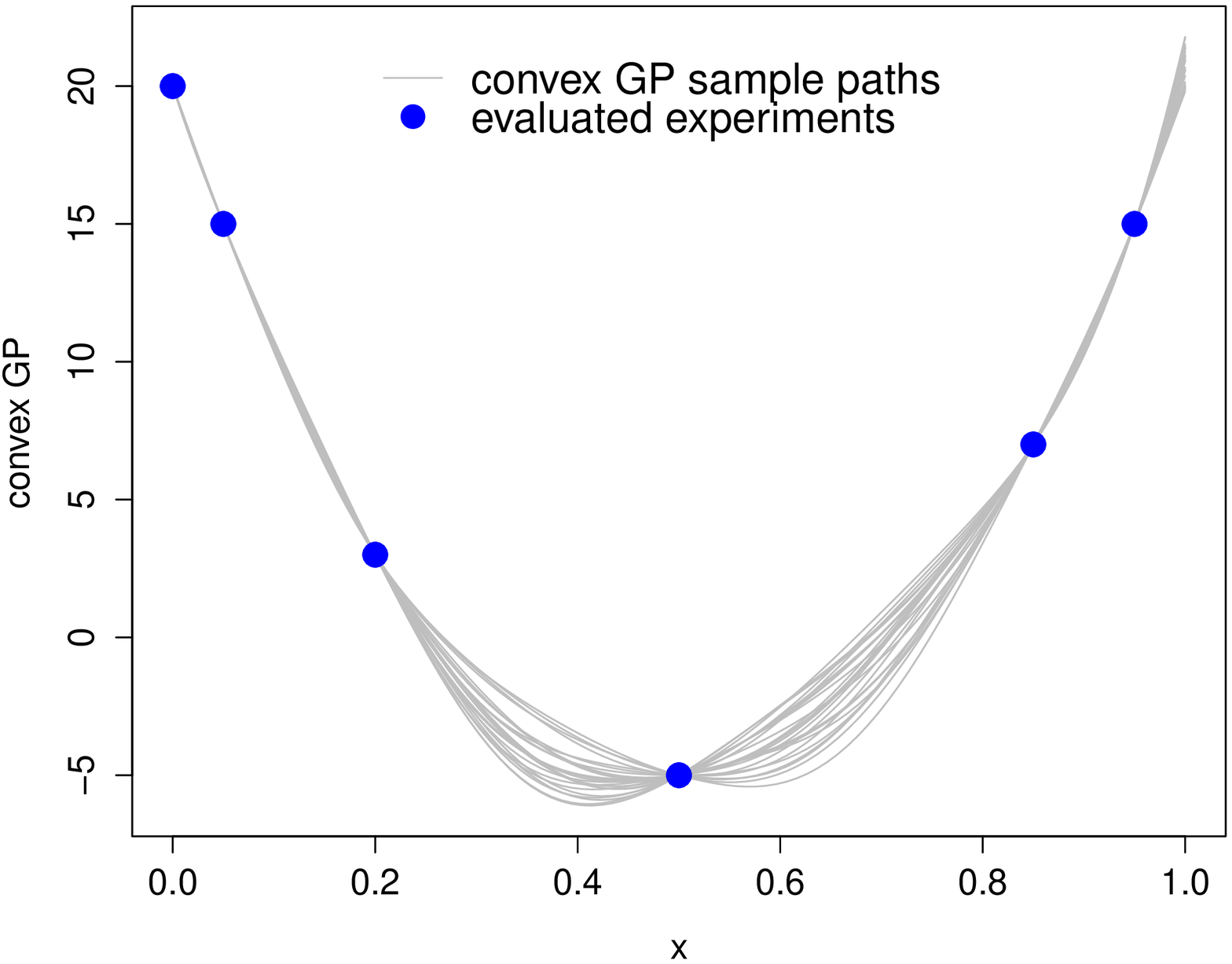}}
\end{minipage}%
\begin{minipage}{.5\linewidth}
\centering
\subfloat[]{\label{convex6pts}\includegraphics[scale=.3]{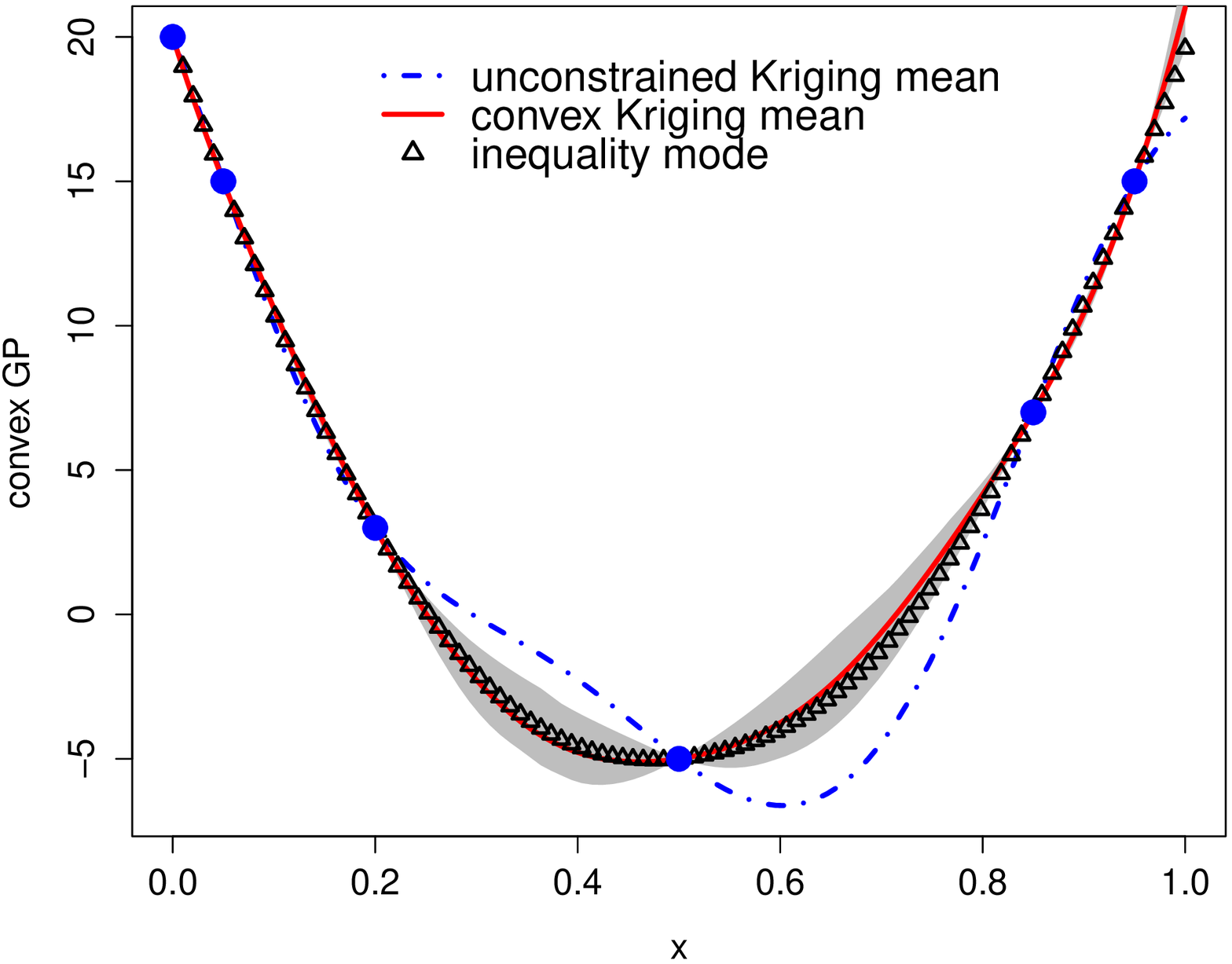}}
\end{minipage}
\caption{Simulated paths drawn from model \eqref{convexityapproach} respecting convexity constraints in the entire domain (Figure~\ref{convex6pts0}). 95\% prediction intervals together with three estimators~: unconstrained Kriging mean, inequality mode and convex Kriging mean (Figure~\ref{convex6pts}).}
\label{convex}
\end{figure}

\subsection{Isotonicity in two dimensions}
In two dimensions, the aim is to interpolate a 2D-function defined on $[0,1]^2$ and non-decreasing with respect to the two inputs. In that case, and by the uniform subdivision of the input set the number of knots and basis functions is $(N+1)^2$. In Figures~\ref{grid}, \ref{dim2} and \ref{monotonesurfaces},
we choose $N=7$, then we have $64$ knots and basis functions. Suppose that the real function is evaluated at four design points given by the rows of the $4\times 2$ matrix 
$\boldsymbol{X}=\left[
\begin{matrix}
	0.1 && 0.9 && 0.5 && 0.8\\
	0.4 && 0.3 && 0.6 && 0.9\\
\end{matrix}\right]^\top$ and the corresponding output $\boldsymbol{y}=\left(5,12,13,25\right)$.
The output values respect monotonicity (non-decreasing) constraints in two dimensions. The two-dimensional Gaussian kernel is used
\begin{equation*}
K(\boldsymbol{x},\boldsymbol{x}')=\sigma^2\exp\left(-\frac{(x_1-x_1')^2}{2\theta_1^2}\right)\times\exp\left(-\frac{(x_2-x_2')^2}{2\theta_2^2}\right),
\end{equation*}
where the variance parameter $\sigma$ is fixed to 10 and the length parameters $(\theta_1,\theta_2)$ to $(1,1)$.
We generate $5$ simulation surfaces taken from model \eqref{monotonicity2Dapproach} conditionally to given data and monotonicity (non-decreasing) constraints with respect to the two input variables (Figure~\ref{4pdim2}). The two red surfaces are the 95\% prediction interval. To check visually the isotonicity, we plot in Figure~\ref{contour} the contour levels of one simulation surface. The blue points represent the interpolation input locations (design points). If we fix one of the variables and we draw the vertical or horizontal line, it must not intersect a contour level two times. \\

\begin{figure}[hptb]
\centering
\includegraphics[scale=0.4]{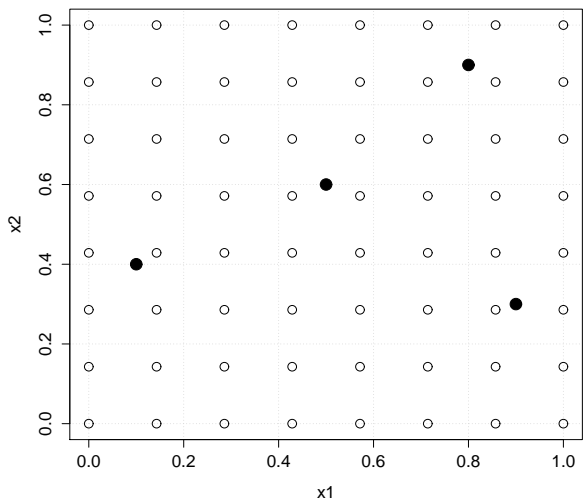}
\caption{Design points for the monotone 2D interpolation problem (black points)
and knots $(u_i,u_j)_{0\leq i,j\leq 7}$ used to define the basis functions.}
\label{grid}
\end{figure}

\begin{figure}[hptb]
\begin{minipage}{.5\linewidth}
\centering
\subfloat[]{\label{4pdim2}\includegraphics[scale=.4]{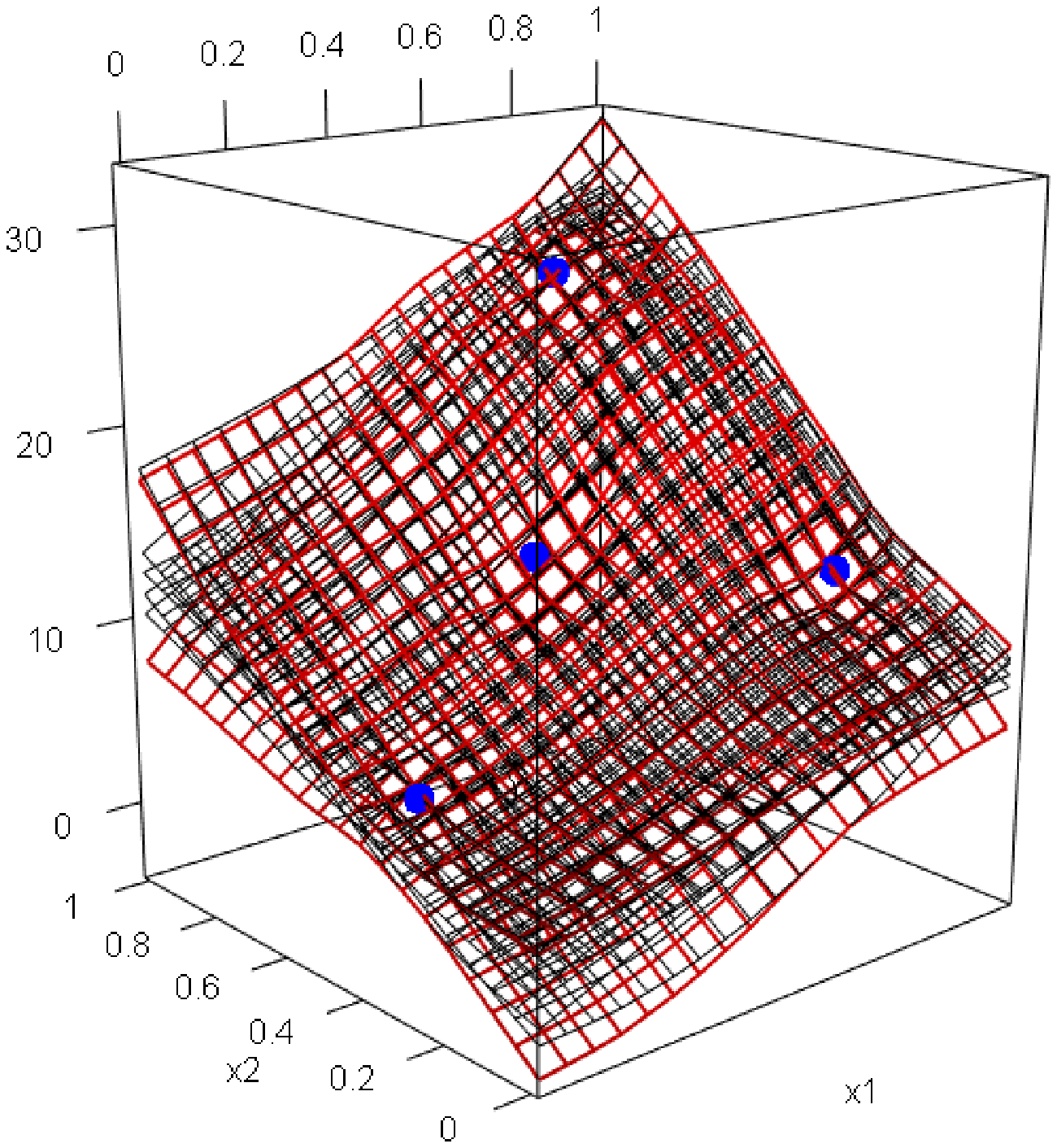}}
\end{minipage}%
\begin{minipage}{.5\linewidth}
\centering
\subfloat[]{\label{contour}\includegraphics[scale=.5]{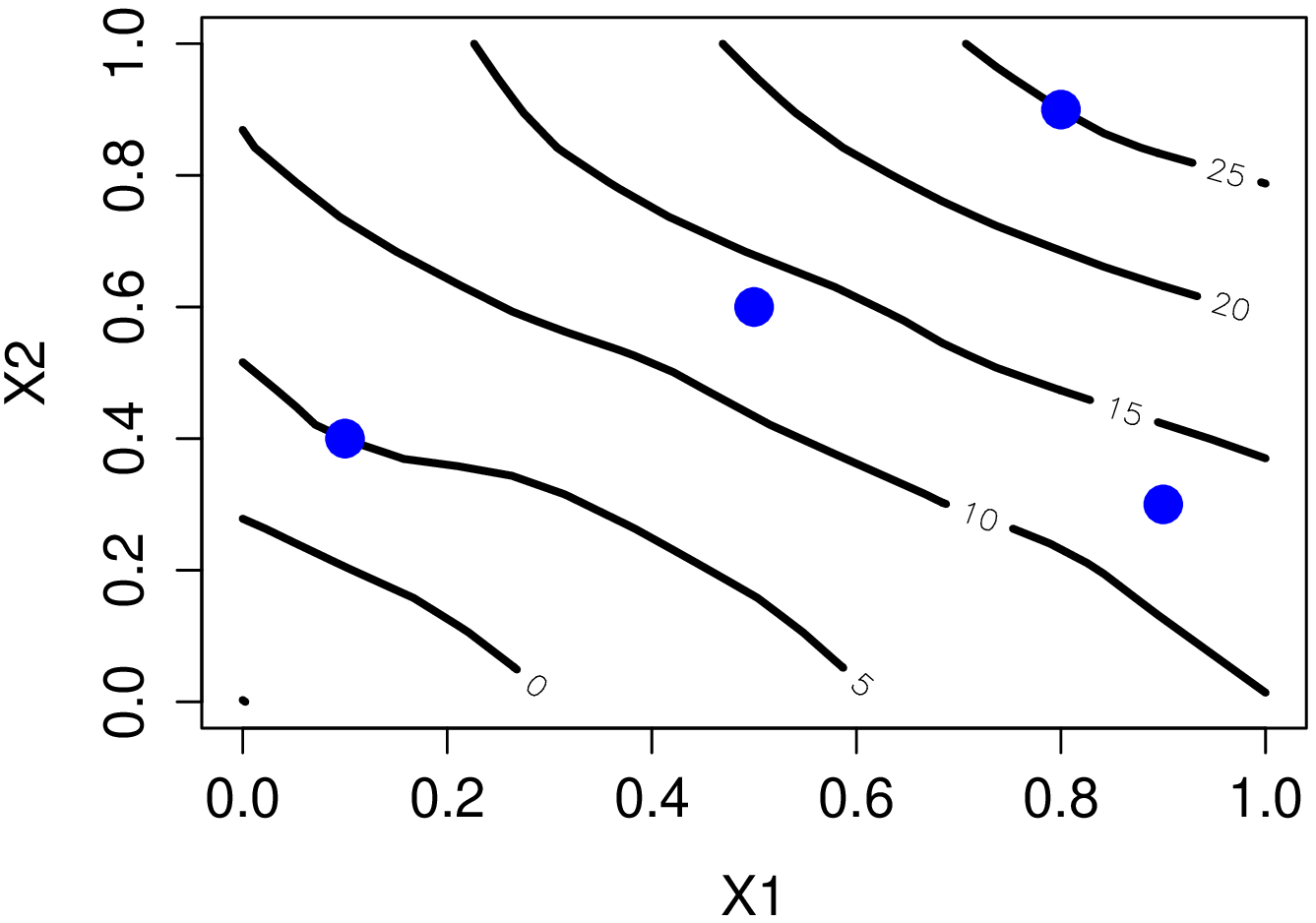}}
\end{minipage}
\caption{Simulated surfaces drawn from model \eqref{monotonicity2Dapproach} respecting monotonicity constraints for the two input variables Figure~\ref{4pdim2}. Contour levels for one
simulated surface Figure~\ref{contour}.}
\label{dim2}
\end{figure}

\begin{figure}[hptb]
  \centering
  \subfloat[][\label{surfacemonotone1}]{\includegraphics[width=.4\textwidth]{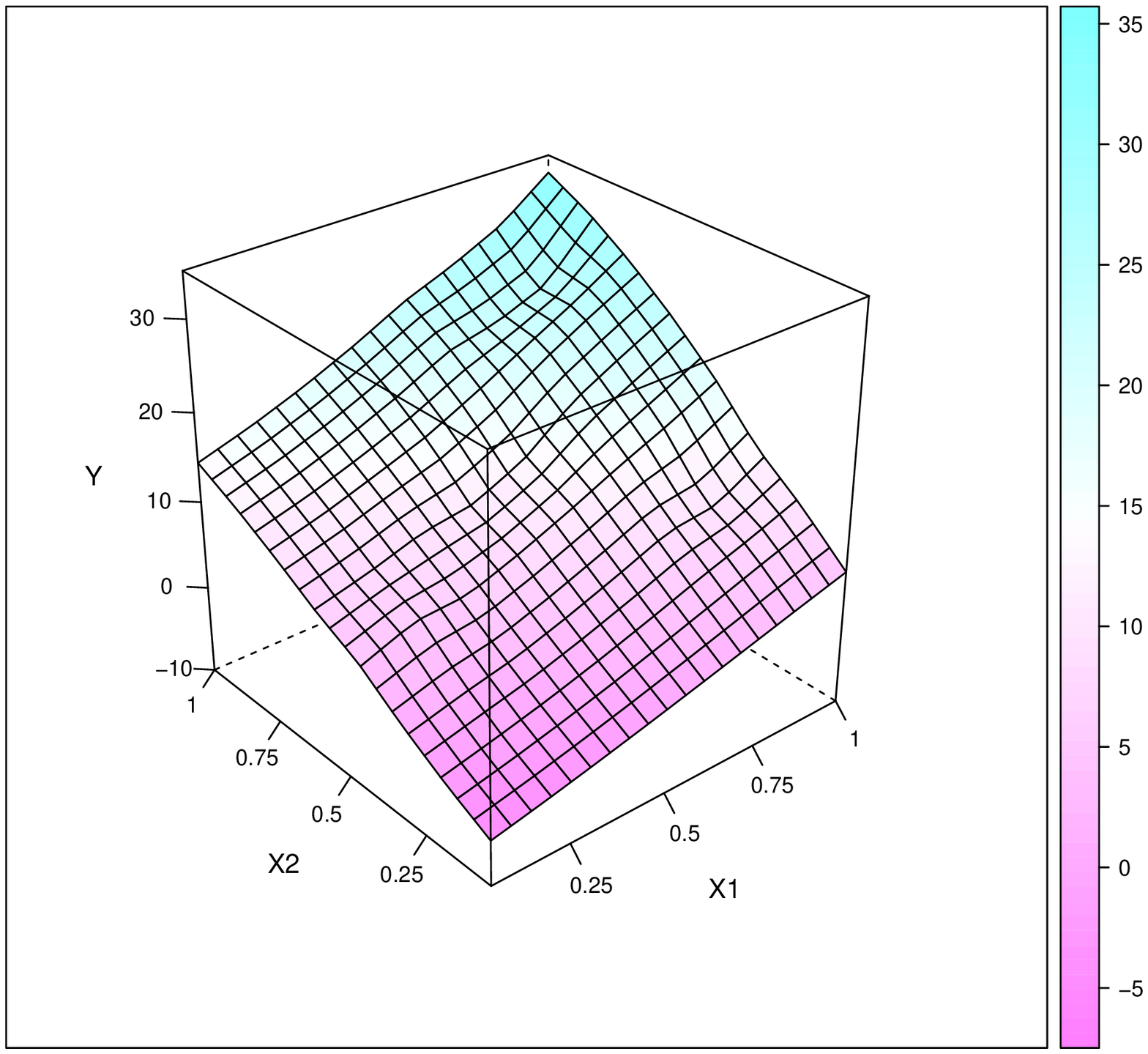}}\quad
  \subfloat[][\label{surfacemonotone2}]{\includegraphics[width=.4\textwidth]{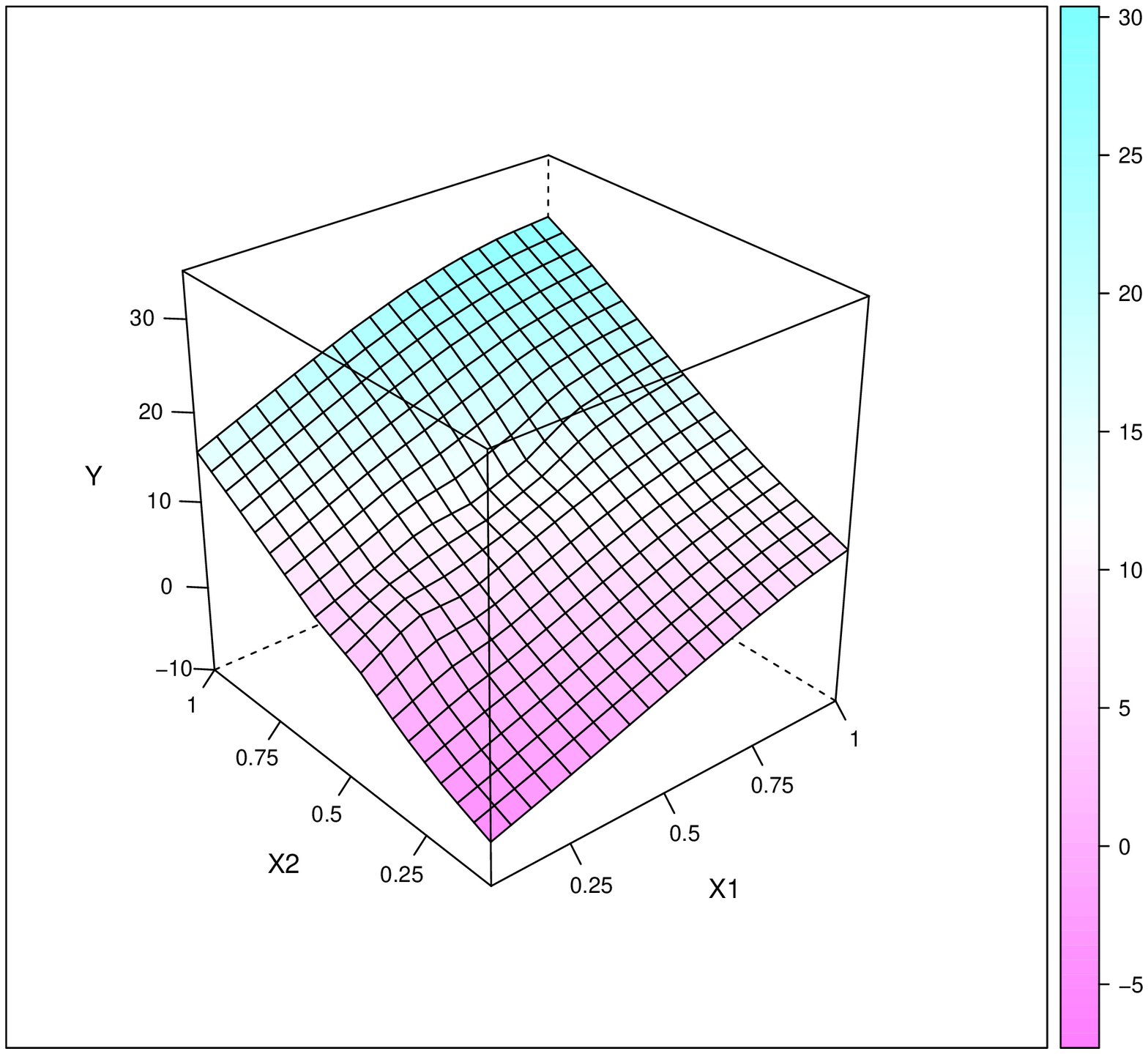}}\\
  \subfloat[][\label{surfacemonotone3}]{\includegraphics[width=.4\textwidth]{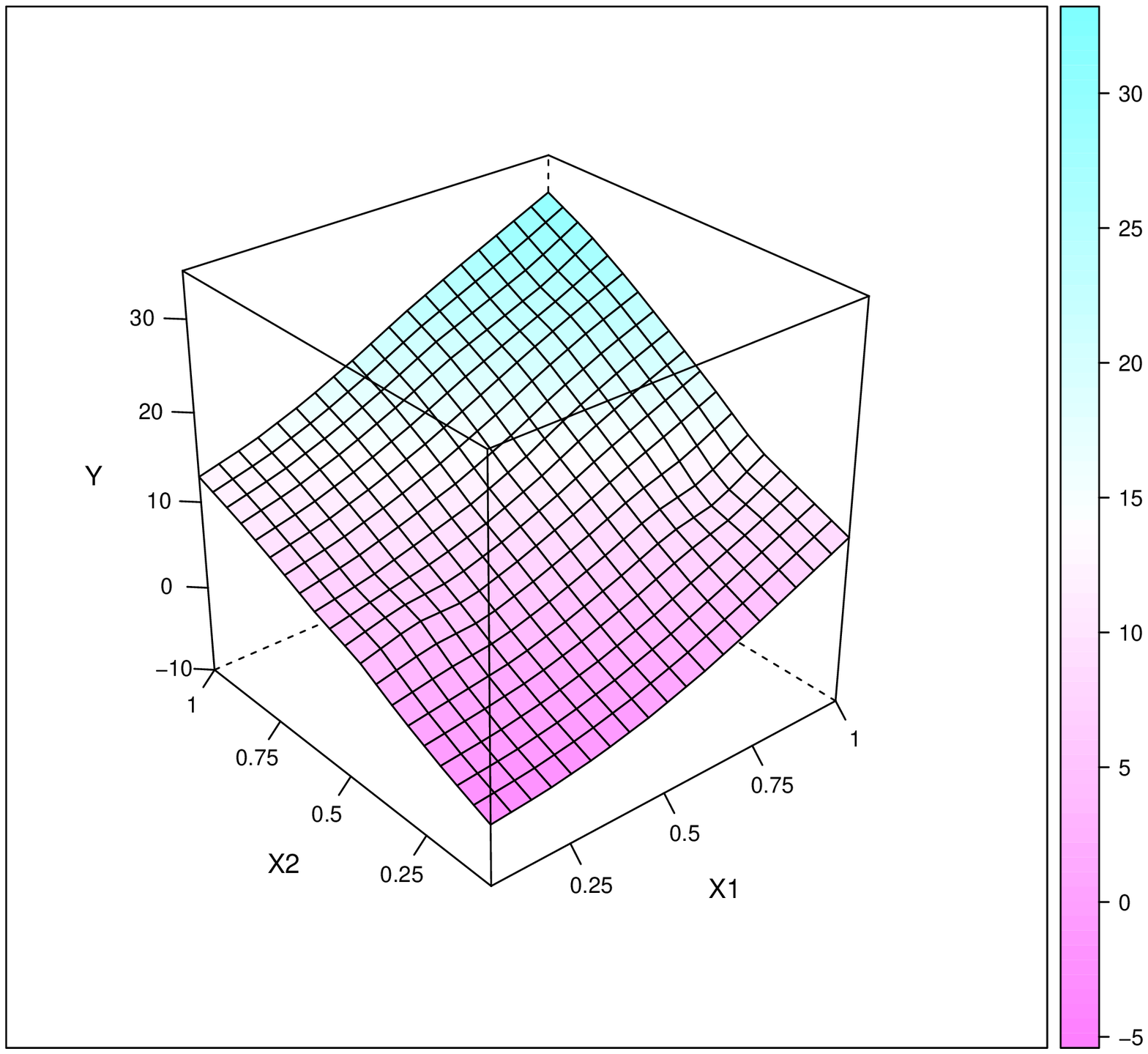}}\quad
  \subfloat[][\label{surfacemonotone4}]{\includegraphics[width=.4\textwidth]{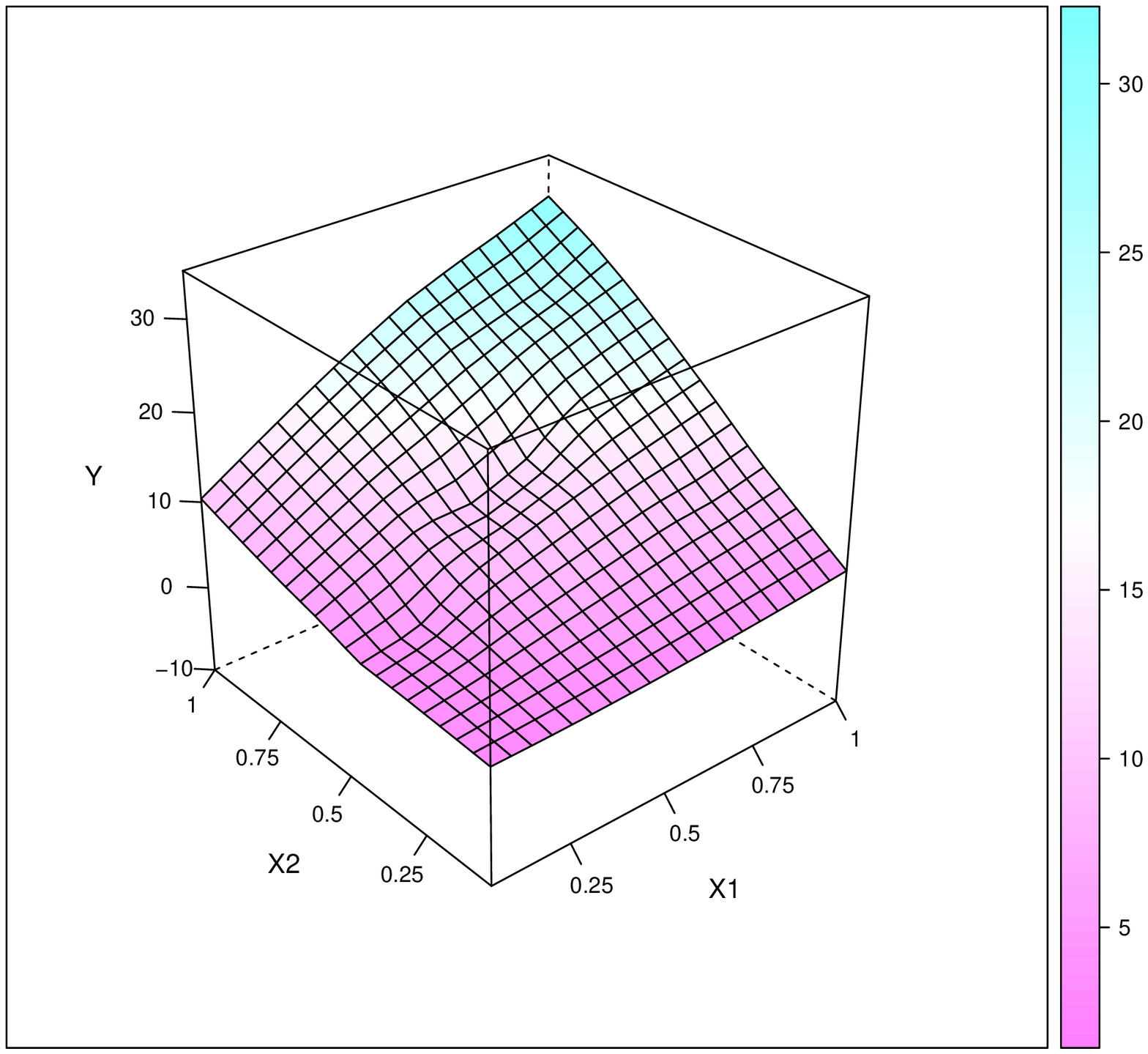}}
  \caption{Simulated surfaces drawn from the example used in Figure~\ref{4pdim2} respecting monotonicity constraints for the two input variables.}
  \label{monotonesurfaces}
\end{figure}

In Figure~\ref{monotonesurfaces}, we draw some simulated surfaces taken from the example used in Figure~\ref{4pdim2}. All the simulated surfaces are non-decreasing with respect to the two input variables.\\

\begin{figure}[hptb]
\begin{minipage}{.5\linewidth}
\centering
\subfloat[]{\label{monotoneX12D}\includegraphics[scale=.26]{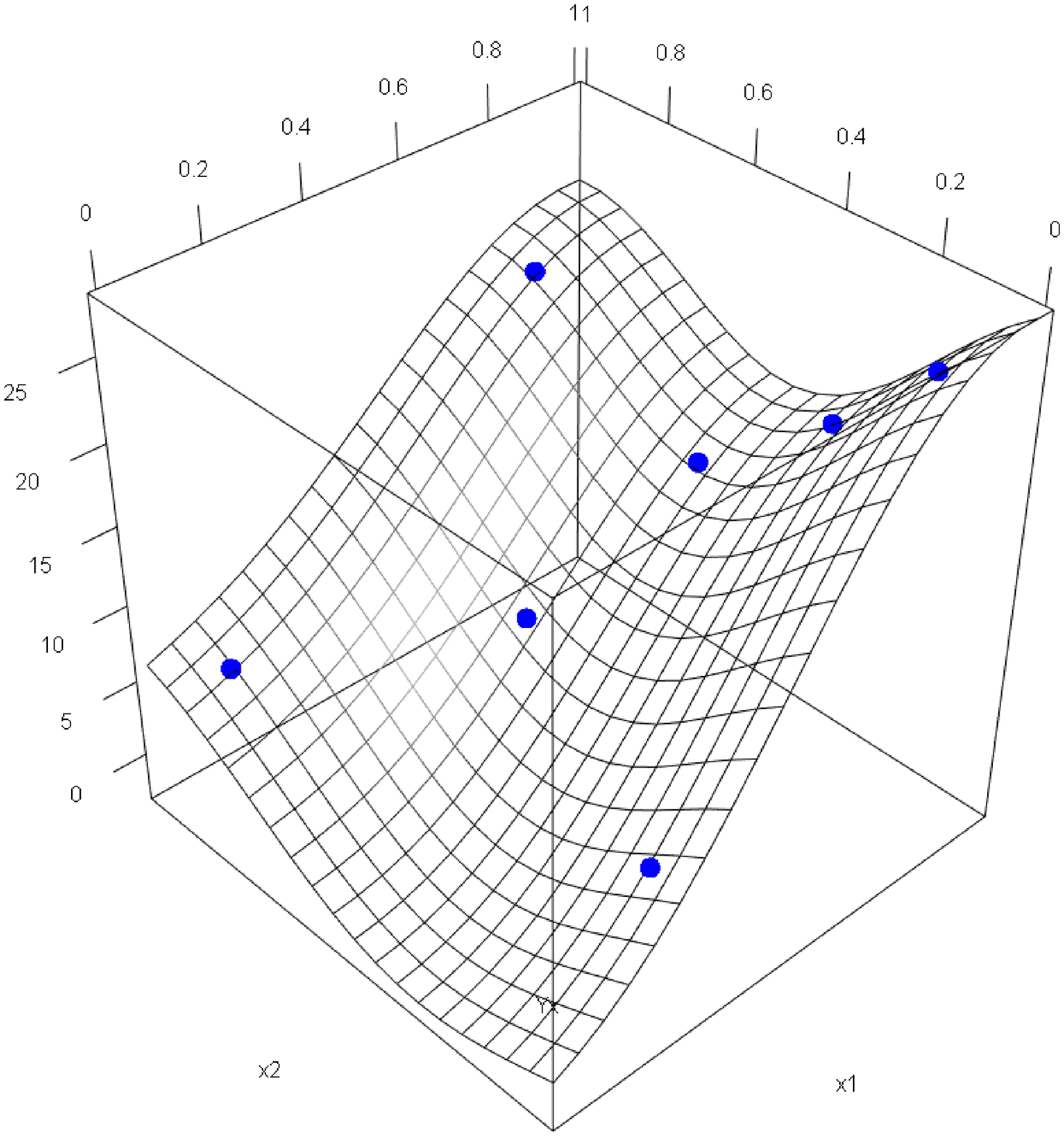}}
\end{minipage}%
\begin{minipage}{.5\linewidth}
\centering
\subfloat[]{\label{contourmonotoneX1}\includegraphics[scale=.31]{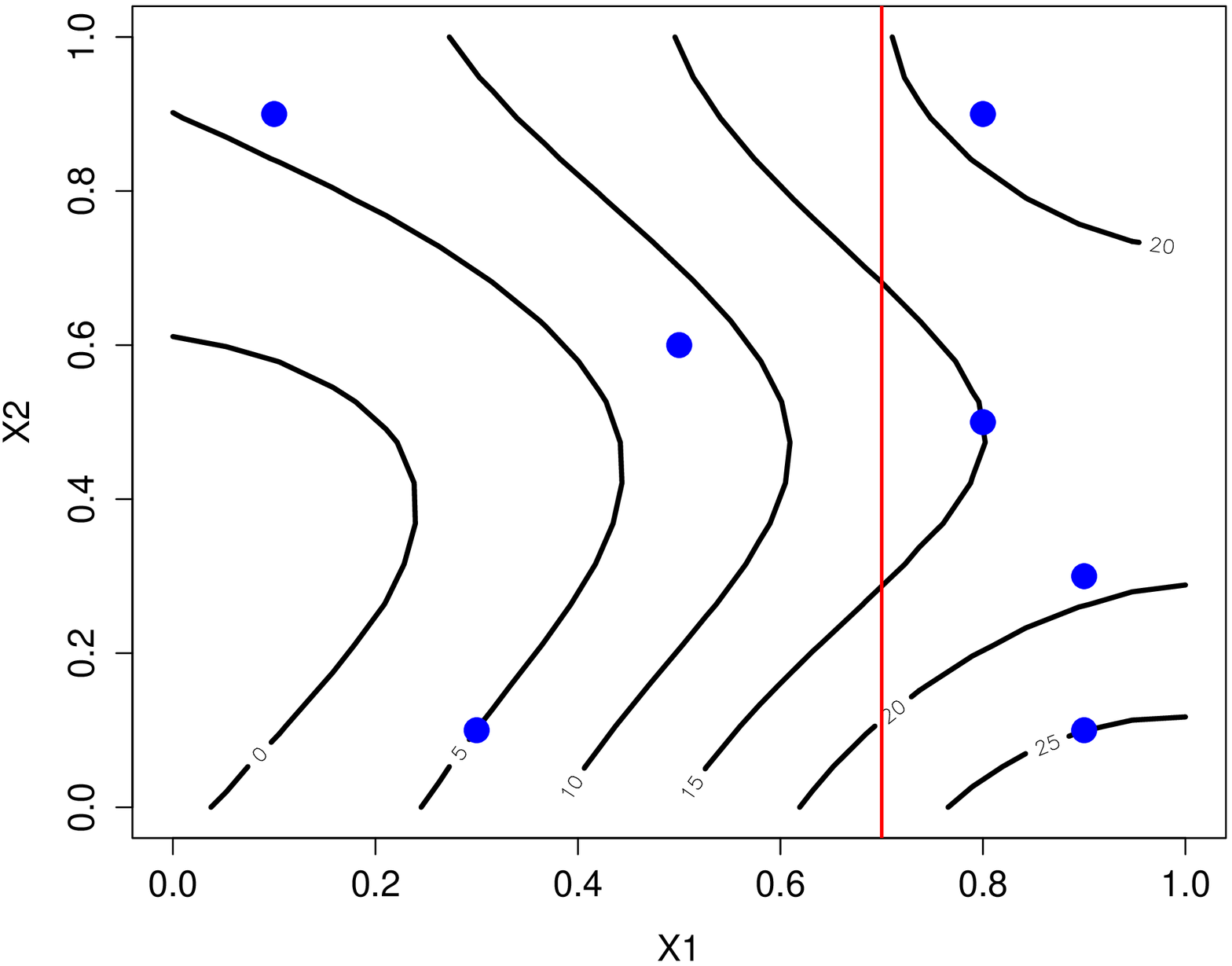}}
\end{minipage}
\caption{Simulated surface drawn from model \eqref{monotonicity2Dx1} respecting monotonicity (non-decreasing) constraints for the first variable only, and the associated contour levels.}
\label{dim2onevariablecontour}
\end{figure}

In Figure~\ref{dim2onevariablecontour}, a simulation surface of the conditional GP at four design points including monotonicity (non-decreasing) constraints with respect to the first input variable only is shown. In that case, we choose $N=23$, so we have $(N+1)^2$ basis functions and knots. The two-dimensional Gaussian covariance function is used with the variance parameter $\sigma^2$ fixed to $10^2$ and the length hyper-parameters $(\theta_1,\theta_2)$ fixed to $(0.5,0.45)$.\\

\section{Numerical convergence}
In order to investigate the convergence rate of the proposed model when $N$ tends to infinity, we plot in Figure~\ref{krigingMode} the inequality mode and the usual (unconstrained) Kriging mean in the situation where they are different. It corresponds to the case where the usual Kriging mean does not respect boundedness constraints (i.e. $\boldsymbol{\xi}_{\text{I}}\notin C_{\boldsymbol{\xi}}$). In Figure~\ref{mode500N20}, we illustrate the inequality mode $M_{\text{IK}}^N$ of the finite-dimensional approximation defined in \eqref{boundaryapproach} when $N=500$. The dashed-line represents $M_{\text{IK}}^N$ when $N=20$, which is close to one generated from $N=500$. Let us specify that the Mat\'ern 3/2 covariance function is used with the length parameter $\theta=0.25$ (see Table~\ref{kernel}).

\begin{figure}[hptb]
\begin{minipage}{.5\linewidth}
\centering
\subfloat[]{\label{krigingMode}\includegraphics[scale=.3]{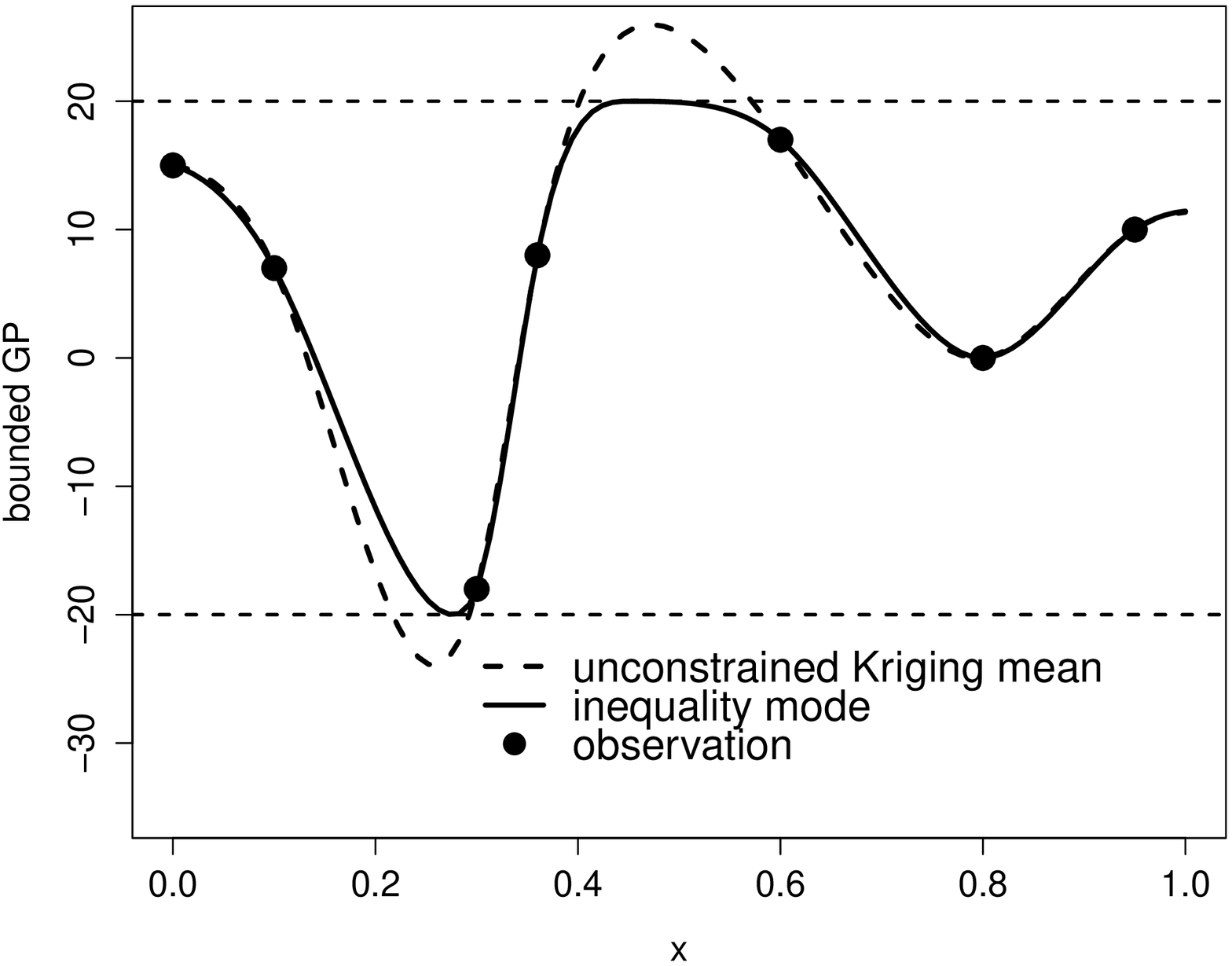}}
\end{minipage}%
\begin{minipage}{.5\linewidth}
\centering
\subfloat[]{\label{mode500N20}\includegraphics[scale=.3]{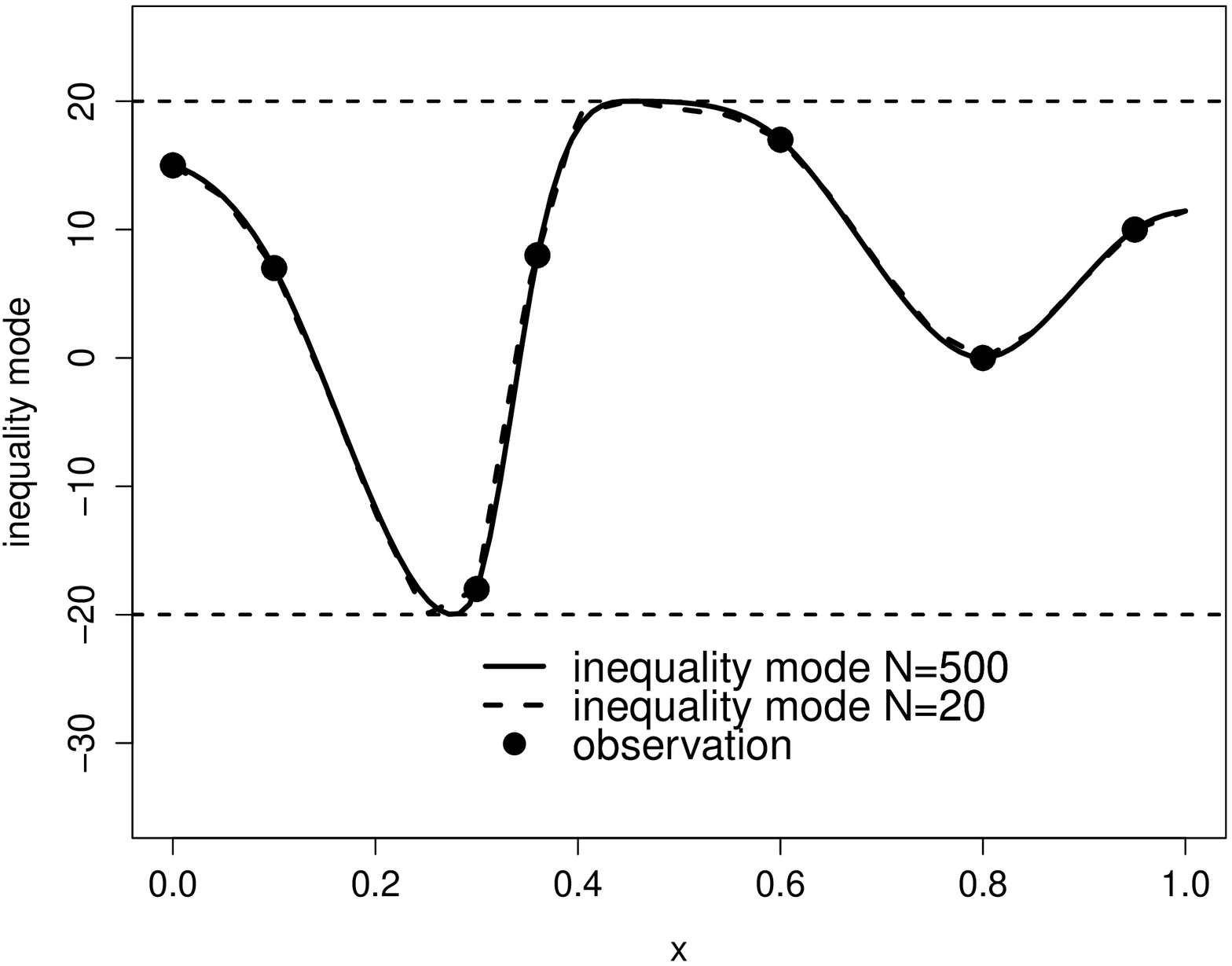}}
\end{minipage}
\caption{In both figures, the solid line represents the inequality mode $M^N_{\text{IK}}$ when $N=500$ which respects boundedness constraints in the entire domain. The dashed-line in Figure~\ref{krigingMode} (resp. Figure~\ref{mode500N20}) represents the usual Kriging mean (resp. the inequality mode when $N=20$).}
\label{approachmode}
\end{figure}

The convergence when $N$ tends to infinity of the finite-dimensional approximation defined in \eqref{monotonicityapproach} with monotonicity constraints is studied in Figure~\ref{approachmodemon}.
In Figure~\ref{krigingModemonotone}, the case where the unconstrained Kriging mean and the inequality mode are different is considered. In both figures, the solid line represents the inequality mode $M_{\text{IK}}^N$ of the finite-dimensional approximation when $N=500$. The Gaussian covariance function is used with the length parameter $\theta=0.3$. In Figure~\ref{modemon500N20}, the dashed-line corresponds to the inequality mode when $N=20$, which is close to one generated from $N=500$.

\begin{figure}[hptb]
\begin{minipage}{.5\linewidth}
\centering
\subfloat[]{\label{krigingModemonotone}\includegraphics[scale=.3]{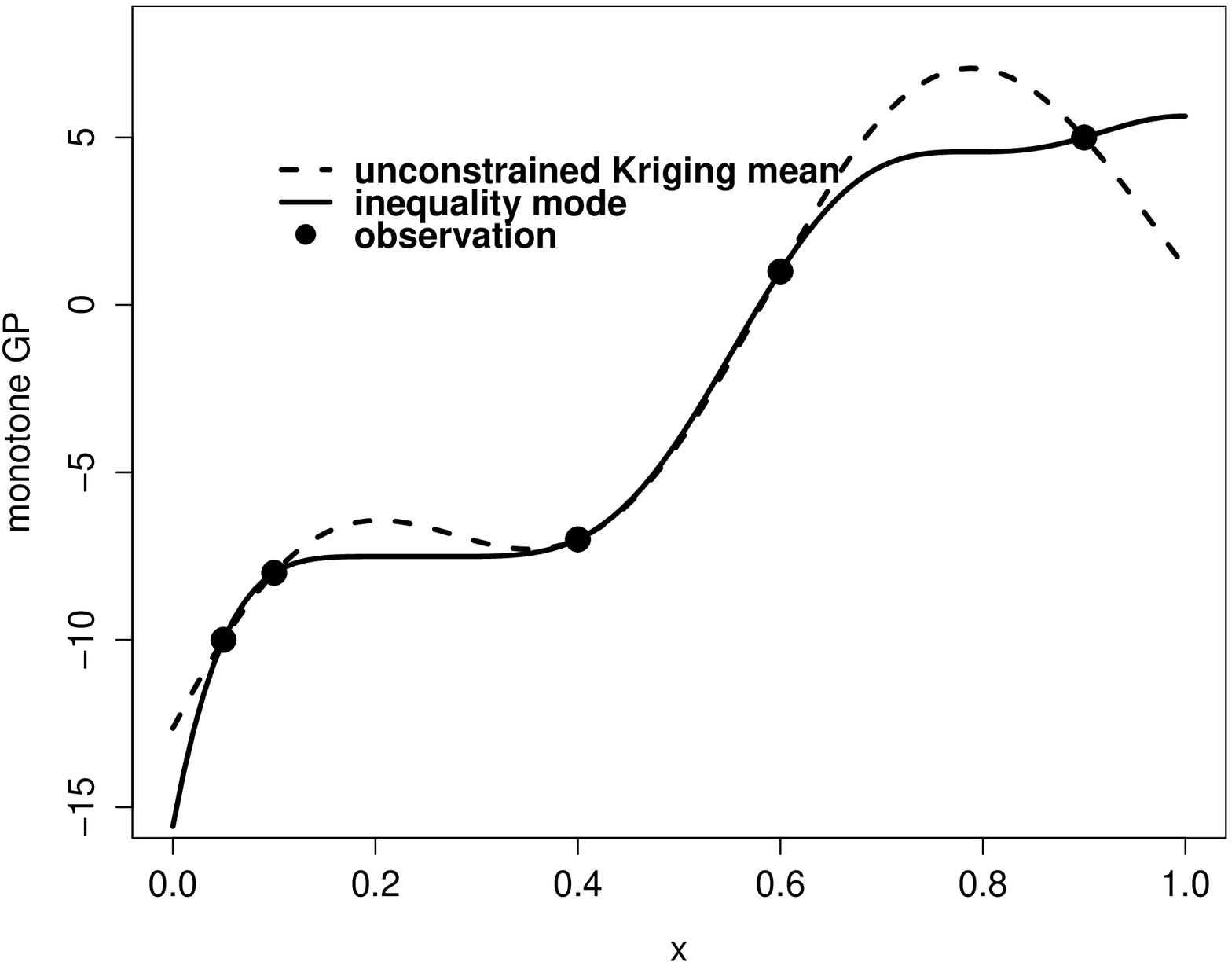}}
\end{minipage}%
\begin{minipage}{.5\linewidth}
\centering
\subfloat[]{\label{modemon500N20}\includegraphics[scale=.3]{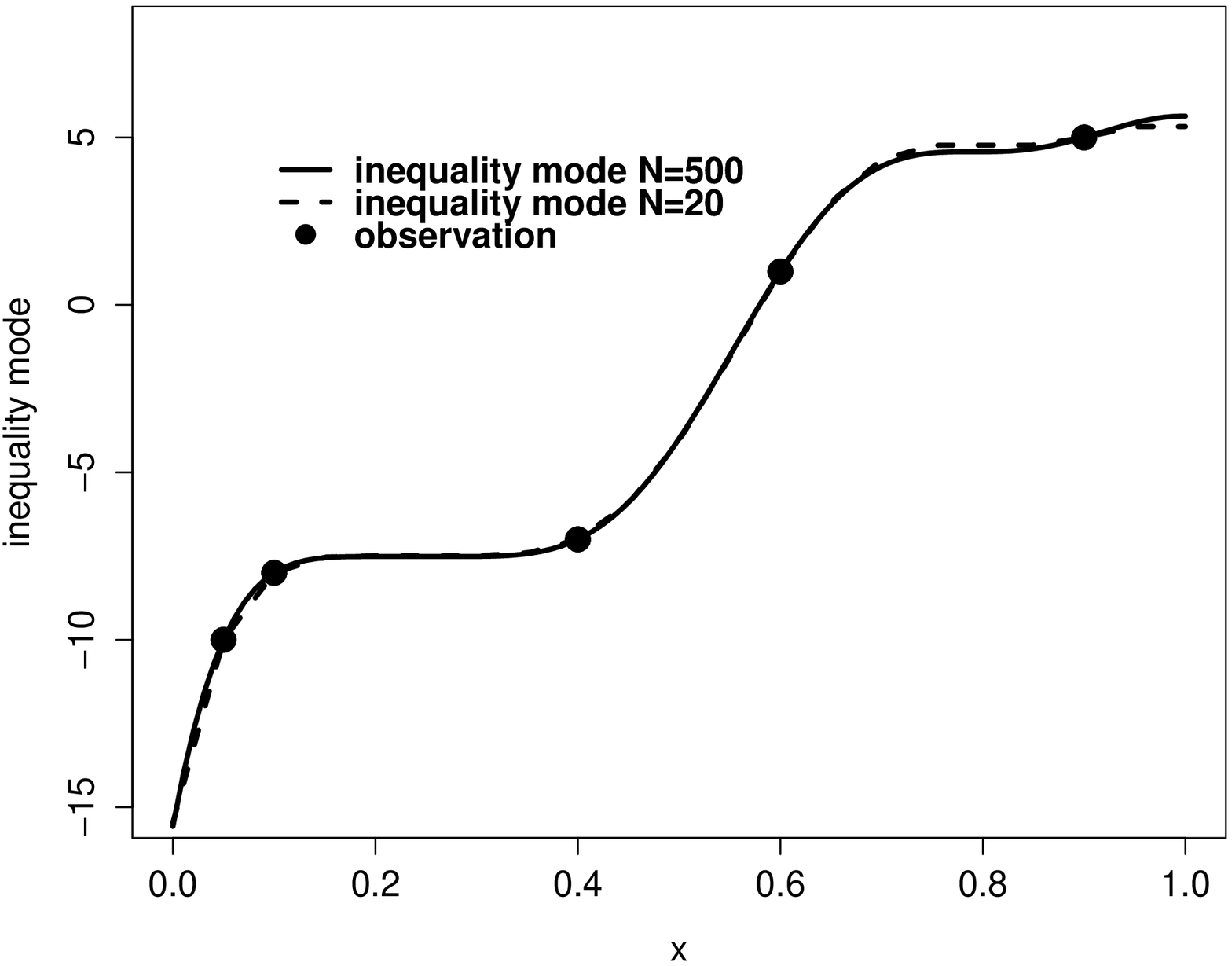}}
\end{minipage}
\caption{In both figures, the solid line represents the inequality mode $M^N_{\text{IK}}$ when $N=500$ which respects monotonicity constraints in the entire domain. The dashed-line in Figure~\ref{krigingModemonotone} (resp. Figure~\ref{modemon500N20}) represents the usual Kriging mean (resp. the inequality mode when $N=20$).}
\label{approachmodemon}
\end{figure}

\section{Conclusion} 
In this article, we propose a new model for incorporating both interpolation conditions and inequality constraints into a Gaussian process emulator. Our method ensures that the inequality constraints are respected not only in a discrete subset of the input set but also in the entire domain. We suggest a finite-dimensional approximation of Gaussian processes which converges uniformly pathwise. It is constructed by incorporating deterministic basis functions and Gaussian random coefficients. We show that the basis functions can be chosen such that inequality constraints of $Y^N$ are \textit{equivalent} to a finite number of constraints on the coefficients. So, the initial problem is \textit{equivalent} to simulate a Gaussian vector restricted to convex sets. This model has been applied to real data in assurance and finance to estimate a term-structure curve and default probabilities (see \cite{2016arXiv160402237C} for more details).\

Now, the problem is open to substantial future work. For practical applications, estimating parameters should be investigated and Cross Validation techniques can be used. The suited Cross Validation method to inequality constraints described in \cite{Maatouk201538} can be developed. As input dimension increases, the efficiency of the method will become low. In fact, the size of the Gaussian vector of random coefficients in the approximation model increases exponentially. However, the choice of knots (subdivision of the input set) can be improved to reduce the cost of simulation, as well as the number of basis functions. This problem is also related to the choice of the basis functions with respect to \textit{a prior} information on the regularity of the real function. Additionally, the simulation of the truncated Gaussian vector can be accelerated by Markov chain Monte Carlo (McMC) methods or Gibbs sampling (see e.g. \cite{Geweke91efficientsimulation} and \cite{Robert}).

\begin{acknowledgements}
Part of this work has been conducted within the frame of the ReDice Consortium, gathering industrial (CEA, EDF, IFPEN, IRSN, Renault) and academic (\'Ecole des Mines de Saint-\'Etienne, INRIA, and the University of Bern) partners around advanced methods for Computer Experiments. The authors also thank Olivier Roustant (ENSM-SE) and Yann Richet (IRSN) for helpful discussions.
\end{acknowledgements}

\bibliographystyle{spmpsci}      
\bibliography{biblio}

\end{document}